\newtheorem{theorem}{Theorem}[section]
\newtheorem{fact}[theorem]{Fact}
\newtheorem{corollary}[theorem]{Corollary}
\newtheorem{proposition}[theorem]{Proposition}
\theoremstyle{definition}
\newtheorem{definition}[theorem]{Definition}
\newtheorem{example}[theorem]{Example}
\theoremstyle{remark}
\newtheorem{remark}[theorem]{Remark}
\numberwithin{equation}{section}
\def\Inf{\operatornamewithlimits{inf\vphantom{p}}}
\newcommand{\cE}{\mathcal{E}}
\newcommand{\cT}{\mathcal{T}}
\newcommand{\cN}{\mathcal{N}}
\newcommand{\cP}{\mathcal{P}}
\newcommand{\bP}{\mathbb{P}}
\newcommand{\bE}{\mathbb{E}}
\newcommand{\Z}{\mathbb{Z}}
\newcommand{\R}{\mathbb{R}}
\newcommand{\la}{\lambda }
\newcommand{\si}{\sigma }
\newcommand{\ga}{\gamma }
\newcommand{\Ga}{\Gamma }
\newcommand{\ones}{\mathbbm{1}}
\newcommand{\one}{\mathbf{1}}
\newcommand{\diam}{\operatorname{Diam}}
\newcommand{\rank}{\operatorname{rank}}
\newcommand{\Mod}{\operatorname{Mod}}
\newcommand{\defeq}{\mathrel{\mathop:}=}
\newcommand{\cCeff}{\mathop{\mathcal{C}_{\eff}}}
\newcommand{\cReff}{\mathop{\mathcal{R}_{\eff}}}
\newcommand{\eff}{\textrm{eff}}
\renewcommand{\Pr}{\mathbb{P}}
\newcommand{\Adm}{\operatorname{Adm}}
\newcommand{\etol}{\epsilon_{\text{tol}}}
\newcommand{\bi}{\begin{itemize}}
\newcommand{\ei}{\end{itemize}}
\newcommand{\vt}{\vspace{0.2cm}}
\begin{document}

\title[Minimal subfamilies]{Minimal subfamilies and the probabilistic interpretation for modulus on graphs}

\author[Albin]{Nathan Albin}
\author[Poggi-Corradini]{Pietro Poggi-Corradini}

\address{Department of Mathematics\\Kansas State University\\ Manhattan, KS 66506}

\thanks{Research partially supported by NSF grants  DMS-1201427 and DMS-1515810}

\dedicatory{On the occasion of David Minda's retirement.}

\begin{abstract}
The notion of $p$-modulus of a family of objects on a graph is a measure of the richness of such families.
We develop the notion of minimal subfamilies using the method of  Lagrangian duality for  $p$-modulus. We show that minimal subfamilies have at most $|E|$ elements and that these elements carry a weight related to their ``importance'' in relation to the corresponding $p$-modulus problem. When $p=2$, this measure of importance is in fact a probability measure and modulus can be thought as trying to minimize the expected overlap in the family.
\end{abstract}

\maketitle

\section{Introduction}

 This study is part of a larger project to enhance both the theoretical understanding of the ways in which diseases spread in an interconnected network of individuals or sub-populations, and the computational tools available to researchers interested in modeling, simulating, and predicting the behavior of epidemics. Interconnectedness plays a key role in the spread of disease in a population, wherein an infection is transmitted from one individual to another and so on through a complex tangle of individual interactions. Individuals in frequent contact with many others are more likely to both contract and to transmit a disease than individuals with few connections, and well-connected populations are more susceptible to epidemic than sparsely connected populations. Thus, understanding and quantifying the degree to which an individual is connected to others is crucial to the study of diseases, their escalation to epidemic status, and possible mitigation strategies.

Classical methods for quantifying interconnectedness are based on graph-theoretic concepts such as effective conductance, minimum cut, and shortest paths. These seemingly disparate concepts turn out to be special cases of a very general object, known in the context of classical analysis as $p$-modulus.  When applied to the context of graphs, $p$-modulus generalizes each of these classical methods for quantifying interconnectedness.

The strength of $p$-modulus lies in the fact that it is much more flexible than the properties it generalizes.  Whereas the classical quantities are all related to the single basic question ``How easily will a disease be able to travel from individual A to individual B?'', the $p$-modulus can quantify the answers to a wide variety of more detailed questions such as ``How easily will a disease be able to travel from individual A to individual B and then on to individual C?'' and ``What are the most important disease transmission pathways connecting two particular sub-populations?''

The study of epidemic phenomena affects the world at large, as can be witnessed from recent disease outbreaks on a global scale. Effective tools to model, predict and mitigate these pandemics can be of practical use to health authorities and scientists, see \cite{gassp:ieee2015}.
Moreover, in view of the general nature of $p$-modulus as a method, its applications go beyond those related to epidemics. Networks are ubiquitous in science and engineering, and the results of this project are applicable in many other settings, \cite{spcsa:jcam2016}.

In this paper we continue our investigation of the mathematical concept of p-modulus on networks, that began in \cite{adsgpc} and \cite{abppcw:ecgd2015}. Here we focus on the analysis of theoretical properties and the development of numerical algorithms from the point of view of Lagrangian duality. In particular we introduce and study the notion of minimal subfamilies. Software based on our techniques has been developed by the first named author, and has already been used in applications, e.g., in \cite{spcsa:jcam2016}. 

\subsection{Modulus on graphs}

The investigation of  $p$-modulus  on graphs comprises three main directions: the theoretical analysis of the mathematical concept, the development of efficient computational
algorithms for its evaluation, and the application of both theory and
algorithms to epidemic modeling and prediction.

The classical theory of $p$-modulus is a continuum theory and was
developed originally in the field of complex
analysis, see the comment on p.~81 of \cite{ahlfors1973}.  In short, $p$-modulus
provides a method for quantifying the richness of families of curves. Heuristically,
families with many short curves have a larger modulus than families with
fewer longer curves.  Although the concept of a discrete modulus is not
new (see, e.g., \cite{duffin:jmaa1962,schramm:ijm1993}), it is less
well understood than in the continuum setting and has not been
considered in the context presented in this paper.

One of the strengths of modulus on graphs is its great versatility and generality.
We will work with weighted and possibly directed finite graphs. So $G= (V,E,\si)$ is a {\it graph} where the vertex-set $V$ and the edge-set $E$ are finite. To every edge $e\in E$ corresponds a weight $0<\si(e)<\infty$.

We will consider {\it families} $\Ga$ of objects $\ga$ on $G$, such as
families of walks, families of trees, etc.
\begin{definition}\label{def:matrixn}
  We will consider families of objects $\Ga$ such that for each
  $\ga\in\Ga$ we can associate a function $\cN(\ga,\cdot):
  E\rightarrow \R_{\ge 0}$, i.e. a vector $\cN(\ga,\cdot)\in\R_{\ge 0}^E$, that
  measures the {\it usage of edge $e$ by $\ga$}.  We assume that each
  $\gamma\in\Gamma$ has positive usage on at least one edge.
\end{definition}

For example:
\bi
\item A walk $\ga=x_0\ e_1\ x_1\ \cdots\ e_n\ x_n$ is associated to the function $\cN(\ga,e)=$ number times $\ga$ traverses $e$. In this case $\cN(\ga,\cdot)\in\Z_{\ge 0}^E$.
\vt
\item A set $T\subset E$ is associated to the function $\cN(T,e)=\ones_T(e)=1$ if $e\in T$ and $0$ otherwise.
Here, $\cN(\ga,\cdot)\in\{0,1\}^E$.
\vt
\item A flow $f$ can be associated with the function $\cN(f,e)=|f(e)|$. Therefore, $\cN(\ga,\cdot)\in \R_{\ge 0}^E$.
\ei
In other words, to each family of objects $\Ga$ we associate a $|\Ga|\times|E|$ {\it usage matrix} $\cN$ such that each row of $\cN$ corresponds to an object $\ga\in\Ga$ and records the usage of edge $e$ by $\ga$. Note, however, that the families $\Ga$ under consideration may very well be infinite.

Given a density  $\rho\in\R_{\ge 0}^E$, the weight $\rho(e)$  represents the {\it cost of using} edge $e$.
In particular, given an object $\ga\in\Ga$, we let
\[
\ell_\rho(\ga):=\sum_{e\in E} \cN(\ga,e)\rho(e) = (\cN \rho)(\ga),
\]
represent the {\it total usage cost}  for $\ga$.

A density $\rho\in\R_{\ge 0}^E$ is {\it admissible for the family $\Ga$},  if
\[
\ell_\rho(\ga) \geq 1\qquad\forall \ga\in\Ga;
\]
or equivalently, if
\[
\ell_\rho(\Ga)\defeq\inf_{\ga\in\Ga}\ell_\rho(\ga) \geq 1;
\]
in matrix notations, we want
\[
\cN \rho \geq {\bf 1}.
\]
Let 
\begin{equation}\label{eq:Adm}
\Adm(\Ga)=\left\{\rho\in\R_{\ge 0}^E: \cN \rho\geq {\bf 1}\right\}
\end{equation}
be the set of admissible densities.

For $1\leq p<\infty$,  the {\it $p$-energy} of $\rho$ is defined as
\begin{equation*}
  \cE_{p,\si}(\rho) \defeq \sum_{e\in E} \si(e)|\rho(e)|^p.
\end{equation*}

For $p=\infty$, we also define the {\it $\infty$-energy} as
\[\cE_\infty(\rho) \defeq
\lim_{p\to\infty}\left(\cE_{p,\si}(\rho)\right)^{\frac{1}{p}} = \max_{e\in
  E}|\rho(e)|\]

\begin{definition}
Given a graph $G= (V,E,\si)$ and a family $\Ga\subset\R_{\ge 0}^E$,
if $1\le
p\le\infty$,  the {\it $p$-modulus} of $\Gamma$ is:
\[ \Mod_{p,\si}(\Ga)\defeq \inf_{\rho\in \Adm(\Ga)}\cE_{p,\si}(\rho)\]
\end{definition}

Equivalently, $p$-modulus is the following optimization problem
\begin{equation*}
  \begin{split}
    \text{minimize} &\qquad \cE_{p,\si}(\rho) \\
    \text{subject to} &\qquad  \cN \rho \geq {\bf 1}
  \end{split}
\end{equation*}
where each object $\ga$ adds one constraint to be satisfied.

\begin{remark}\label{rem:remarks}
\bi
\item[(a)] When $\rho_0\equiv 1$ is the constant density equal to $1$,
  we drop the subscript and write
  $\ell(\ga)\defeq\ell_{\rho_0}(\ga)$. If $\ga$ is a walk, then
  $\ell(\ga)$ simply counts the number of hops that the walk $\ga$
  makes.
\item[(b)] If $\Ga\subset\Ga'$, then $\Adm(\Ga')\subset \Adm(\Ga)$, so
  $\Mod_{p,\si}(\Ga)\leq \Mod_{p,\si}(\Ga')$, for all $1\leq p\leq
  \infty$. This is known as {\it monotonicity} of modulus.
\item[(c)] For $1<p<\infty$ a unique extremal density $\rho^*$ always
  exists and satisfies $0\leq \rho^*\leq \cN_{\text{min}}^{-1}$, where
  $\cN_{\text{min}}$ is the smallest nonzero entry of $\cN$. Existence
  and uniqueness follows by compactness and strict convexity of
  $\cE_{p,\si}$, see also Lemma 2.2 of \cite{abppcw:ecgd2015}.  The
  upper bound on $\rho^*$ follows from the fact that each row of $\cN$
  contains at least one nonzero entry, which must be at least as large
  as $\cN_{\text{min}}$.  In the special case when $\cN$ is integer
  valued, the upper bound can be taken to be $1$.\ei
\end{remark}

\subsection{Connection to classical quantities}

The concept of $p$-modulus generalizes known classical ways of
measuring the richness of a family of walks.  Given two vertices $s$
and $t$ in $V$, we define the {\it connecting family} $\Ga(s,t)$ to be
the family of all walks in $G$ that start at $s$ and end at $t$.  Also
a subset $C\subset E$ is called a {\it cut} for a family of walks
$\Ga$ if for every $\ga\in\Ga$, there is $e\in C$ such that
$\cN(\ga,e)\neq 0$.  Moreover, the size of a cut is measured by
$|C|\defeq \sum_{e\in C}\si(e)$.  When the graph $G$ is undirected, it
can be thought of as an electrical network with edge-conductances given
by the weights $\si$, see the monograph \cite{doyle-snell1984}. In
this case, given two vertices $s$ and $t$ in $V$, we write
$\cCeff(s,t)$ for the {\rm effective conductance} between $s$ and
$t$. The following result is a slight modification of the results
in\cite[Section~5]{abppcw:ecgd2015}, taking into account the
definition of $\cN_{\text{min}}$ in Remark~\ref{rem:remarks} (c).

\begin{theorem}[\cite{abppcw:ecgd2015}]\label{thm:generalize}
  Let $G$ be a weighted graph and $\Ga$ a family of objects on
  $G$. Then the map $ p\mapsto \Mod_{p,\si}(\Ga) $ is continuous for
  $1\leq p\leq \infty$, $p\mapsto \cN_{\text{min}}^p
  \Mod_{p,\si}(\Ga)$ is decreasing, and the map $ p\mapsto
  \left(\Mod_{p,\si}(\Ga)/\si(E)\right)^{1/p} $ is increasing, where
  $\sigma(E) := \sum_{e\in E}\sigma(e)$.  Moreover, \bi
\item{\bf\it For} $\mathbf{p=\infty}$: 
\[
\Mod_\infty(\Ga)=\frac{1}{\ell(\Ga)}.
\]
\item {\bf\it For} ${\mathbf p=1}$,  when $G$ is unweighted and  $\Ga=\Ga(s,t)$ is a connecting family,
\[
\Mod_1(\Ga)=\min\{|C|:\mbox{ $C$ a cut of $\Ga$}\}
\]

\item {\bf\it For} ${\mathbf p=2}$, if $G$ is undirected and $\Ga=\Ga(s,t)$ is a connecting family,
\[
\Mod_2(\Ga)=\cCeff(s,t),
\]
\ei
\end{theorem}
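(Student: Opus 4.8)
The plan is to exploit the fact, emphasized in the statement, that the admissible set $\Adm(\Ga)$ does not depend on $p$, so that every claim becomes a statement about how the single functional $\rho\mapsto\cE_{p,\si}(\rho)$ varies with $p$ while $\rho$ ranges over one fixed convex set. Writing $\mu(e):=\si(e)/\si(E)$ for the probability measure obtained by normalizing the weights, I observe that $(\cE_{p,\si}(\rho)/\si(E))^{1/p}=\|\rho\|_{L^p(\mu)}$ and $\cE_{p,\si}(\rho)=\|\rho\|_{L^p(\si)}^p$, so the continuity and monotonicity claims are really standard facts about weighted $\ell^p$ norms, made rigorous by the a priori bound $0\le\rho^*\le\cN_{\text{min}}^{-1}$ of Remark~\ref{rem:remarks}(c). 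The first use of that bound is to replace the infimum over all of $\Adm(\Ga)$ by the infimum over the compact set $K:=\Adm(\Ga)\cap[0,\cN_{\text{min}}^{-1}]^E$, which carries the extremal density for every $p\in(1,\infty)$.

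For the two monotonicity statements I would argue pointwise. For fixed $\rho\in K$ the map $p\mapsto\|\rho\|_{L^p(\mu)}$ is nondecreasing by Jensen's inequality (an $L^p$ norm against a probability measure increases in $p$), and the infimum of a family of nondecreasing functions is again nondecreasing, giving monotonicity of $p\mapsto(\Mod_{p,\si}(\Ga)/\si(E))^{1/p}$. For the other direction I would use that for $\rho\in K$ one has $\cN_{\text{min}}\rho(e)\in[0,1]$, so $t\mapsto t^{p}$ being decreasing on $[0,1]$ yields $\cN_{\text{min}}^{\,p_2}\cE_{p_2,\si}(\rho)\le\cN_{\text{min}}^{\,p_1}\cE_{p_1,\si}(\rho)$ for $p_1\le p_2$; evaluating at the extremal $\rho$ for $p_1$ and then using its admissibility for the $p_2$ problem shows $p\mapsto\cN_{\text{min}}^{\,p}\Mod_{p,\si}(\Ga)$ is nonincreasing. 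Continuity on $(1,\infty)$ then follows because $(\rho,p)\mapsto\cE_{p,\si}(\rho)$ is jointly continuous on the compact, $p$-independent set $K\times(1,\infty)$, so the value function is continuous; continuity up to the endpoints is obtained from the explicit endpoint formulas below together with the monotone sandwich.

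The case $p=\infty$ is immediate: testing the constant density $\rho\equiv c$ shows admissibility forces $c\ge1/\ell(\Ga)$, so $\Mod_\infty(\Ga)\le1/\ell(\Ga)$, while for any admissible $\rho$ and any $\ga$ the bound $1\le\ell_\rho(\ga)\le(\max_e\rho(e))\,\ell(\ga)$ gives the reverse. For $p=1$ with $G$ unweighted and $\Ga=\Ga(s,t)$, the indicator $\ones_C$ of any cut $C$ is admissible, so $\Mod_1(\Ga)\le\min\{|C|:C\text{ a cut of }\Ga\}$; the reverse is the max-flow/min-cut half, which I would obtain by recognizing the $\ell^1$ problem as a linear program whose dual is a maximal-flow problem, invoking total unimodularity of the walk–edge incidence structure to guarantee an integral, hence genuinely combinatorial, optimal cut.

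The case $p=2$ is the one I expect to be the real obstacle. For the upper bound I would take the harmonic potential $v$ with $v(s)=0$, $v(t)=1$ and set $\rho(e):=|v(x)-v(y)|$ for $e=\{x,y\}$; telescoping along any walk $\ga\in\Ga(s,t)$ gives $\ell_\rho(\ga)\ge|v(t)-v(s)|=1$, so $\rho$ is admissible and $\cE_{2,\si}(\rho)=\cCeff(s,t)$, whence $\Mod_2(\Ga)\le\cCeff(s,t)$. The reverse inequality is the delicate part: here I would pass to the Lagrangian dual of the $2$-modulus problem (the duality this paper develops) and identify the dual optimizer with a unit $s$–$t$ current flow, so that the dual value matches Thomson's principle for effective resistance; equivalently one shows the extremal $\rho^*$ must have gradient form $\rho^*=|dv^*|$ for a harmonic $v^*$, forcing $\Mod_2(\Ga)\ge\cCeff(s,t)$. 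Establishing that the extremal density is a genuine potential difference, rather than merely an admissible density, is the crux, and it is exactly where the electrical-network structure (Kirchhoff's laws, or the flow formulation of the dual) must enter.
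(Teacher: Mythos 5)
The paper you are being compared against does not actually prove Theorem~\ref{thm:generalize}: it imports it (with a small modification involving $\cN_{\text{min}}$) from \cite{abppcw:ecgd2015}, remarking only that the $p=2$ case goes back to Duffin \cite{duffin:jmaa1962} and that the cited proof is inspired by \cite{heinonen2001}. So your proposal must stand on its own. Much of it does: the reduction to the compact, $p$-independent set $K=\Adm(\Ga)\cap[0,\cN_{\text{min}}^{-1}]^E$ is legitimate (truncating any admissible $\rho$ at $\cN_{\text{min}}^{-1}$ preserves admissibility, since an edge on which truncation occurs already contributes $\cN(\ga,e)\cN_{\text{min}}^{-1}\ge 1$), the two pointwise monotonicity arguments and their passage to the infimum are correct, joint continuity of $(\rho,p)\mapsto\cE_{p,\si}(\rho)$ on $K\times[1,\infty)$ gives continuity there, and your $p=\infty$ argument is complete. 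One caution on the endpoint: literal continuity at $p=\infty$ is false (in Example~\ref{ex:basic}, $\Mod_p(\Ga)=k/\ell^{p-1}\to 0\ne 1/\ell$); the correct statement, which your sandwich does yield if you spell it out via $\mu_{\min}^{1/p}\,\cE_\infty(\rho)\le\|\rho\|_{L^p(\mu)}\le\cE_\infty(\rho)$ on $K$, is $\lim_{p\to\infty}\Mod_p(\Ga)^{1/p}=\Mod_\infty(\Ga)$.

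There are two genuine gaps. First, for $p=1$ your reverse inequality rests on ``total unimodularity of the walk--edge incidence structure,'' and that claim is false: path--edge incidence matrices are not TU in general. Concretely, take vertices $s,u,v,w,t$ and edges $su,\,sv,\,uv,\,vw,\,uw,\,wt,\,vt$; the three $s$--$t$ paths $s\,u\,v\,w\,t$, $s\,u\,w\,v\,t$, $s\,v\,u\,w\,t$, restricted to the columns $uv,vw,uw$, give the incidence pattern of an odd cycle, a $3\times 3$ submatrix of determinant $2$. What rescues the claim is not TU but the max-flow/min-cut theorem itself (equivalently, idealness of the $s$--$t$ path clutter): the LP dual of $\Mod_1$ is fractional path flow, and Ford--Fulkerson provides an integral optimal cut. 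Second, for $p=2$ you correctly isolate $\Mod_2(\Ga)\ge\cCeff(s,t)$ as the crux but only name two strategies without executing either, so this direction is missing as written. The elementary route -- and the one behind the cited proof -- avoids duality entirely: given any admissible $\rho$, set $u(x):=\min\{\ell_\rho(\ga):\ga \mbox{ a walk from } s \mbox{ to } x\}$, so that $u(s)=0$, $u(t)\ge 1$, and $|u(x)-u(y)|\le\rho(e)$ for every edge $e=\{x,y\}$ (undirectedness is used here); truncating $u$ at $1$ and applying Dirichlet's principle gives $\cCeff(s,t)\le\sum_{e=\{x,y\}}\si(e)|u(x)-u(y)|^2\le\cE_{2,\si}(\rho)$. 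Your duality/Thomson route can also be completed -- via Theorem~\ref{thm:prob} it amounts to showing $\min_{\mu\in\cP(\Ga)}\|\cN^T\mu\|_2^2=\cReff(s,t)$ -- but that requires the orientation and flow-decomposition arguments (a pmf on paths induces a unit $s$--$t$ flow of no larger energy; conversely the unit current flow decomposes into a pmf on paths) which you did not supply.
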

\begin{remark}
An early version of the case $p=2$ is due to Duffin \cite{duffin:jmaa1962}. Our proof in \cite{abppcw:ecgd2015} takes inspiration from a very general result in metric spaces, see \cite{heinonen2001}.
\end{remark}

\begin{example}[Basic Example]\label{ex:basic}
Let $G$ be a graph consisting of $k$ simple paths in parallel, each path taking $\ell$ hops to connect a given vertex  $s$ to a given vertex $t$. Assume also that $G$ is unweighted, that is $\si\equiv 1$. Let $\Ga$ be the family consisting of the $k$ simple paths.  
Then $\ell(\Ga)=\ell$ and the size of the minimum cut is $k$. A simple computation shows that
\[
\Mod_p(\Ga)=\frac{k}{\ell^{p-1}}\quad\mbox{for } 1\le p<\infty,\qquad
\Mod_\infty(\Ga)=\frac{1}{\ell}.
\]
In particular, $\Mod_p(\Ga)$ is continuous in $p$, and $\lim_{p\to\infty}\Mod_p(\Ga)^{1/p}=\Mod_\infty(\Ga)$. 
Intuitively, when $p\approx 1$, $\Mod_p(\Ga)$ is more sensitive to the number of parallel paths, while for
$p>>1$, $\Mod_p(\Ga)$ is more sensitive to short walks. 
\end{example}

\subsection{Advantages of Modulus}

As we saw above in Theorem \ref{thm:generalize}, the concept of
modulus encompasses classical quantities such as shortest path,
minimal cut, and effective conductance. For this reason, modulus has
many advantages. For instance, in order to give effective conductance
a proper interpretation in terms of electrical networks one needs to
consider the Laplacian operator which on undirected graphs is a
symmetric matrix. On directed graphs however the Laplacian ceases to
be symmetric, so the electrical network model breaks down.  The
definition of modulus, however, does not rely on the symmetry of the
Laplacian and, therefore, can still be defined and computed in this
case.

Moreover,  modulus can measure the richness of many  types of families of objects, not just connecting walk families.
Here are some examples of families of objects that can be measured using modulus:
\bi
\item {\bf Spanning Tree Modulus:} All spanning trees of $G$.
\vt
\item {\bf Loop Modulus:} All simple cycles in $G$.
\vt
\item {\bf Long Path Modulus:} All simple paths with at least $L$ hops
\vt
\item {\bf Via Modulus:} All walks that start at $s$, end at $t$, and visit $u$ along the way. 
\ei

For instance, in \cite{spcsa:jcam2016}, we studied the following centrality measure for a node $v$.
\begin{equation*}\label{eq:Centrality}
C\left(v\right) \defeq
\sum_{i=1}^{\diam(G)}\Mod_2\left(v,S_i(v)\right)
\end{equation*}
where $S_i(v)=\{u: d(u,v)=i\}$ is the layer of nodes that are exactly $i$ hops away from $v$.
In order to compare $C(v)$ to other centrality measures, we  simulated many epidemics and then measured the effects of
vaccinating a percentage of the nodes using different centrality measures to pick the vaccinated nodes.

In the same paper  \cite{spcsa:jcam2016}, we also proposed different measures of ``inbetweenness''.
Let $\Ga \left(A,B;c\right)$ represent the family of all walks that start from a set of nodes $A$, visit a node $c$, and end in $B$. We will assume that these walks are not loops, namely they will not be able to end at the starting node. 
We write  $\Mod_p\left(A,B;c\right)$ for $\Mod_p\left(\Ga \left(A,B;c\right)\right)$. For instance, one can compute the ``inbetweenness'' of a node $v$ by computing the average:
\[
\bE_{s,t}\left[\frac{\Mod_p(s,t;v)}{\Mod_p(s,t)}\right]
:=
{|V|\choose 2}^{-1}\sum_{s\ne t}\left[\frac{\Mod_p(s,t;v)}{\Mod_p(s,t)}\right]
\]
Instead, we decided to capture inbetweenness with a single modulus computation by defining
\[BC\left(v\right) \defeq \Mod_2\left(A,A;v\right)\]
where $A$ consists of a small portion of the most central nodes based on the centrality $C(v)$ mentioned above.

Experimentally, these modulus-based measures of centrality perform very well with respect to other measures. A partial explanation of why modulus seems to be an appropriate tool for studying epidemics is provided in \cite{gassp:ieee2015}, where the concept of Epidemic Hitting Time is introduced and is then compared to modulus as well as other measures.

\subsection{Main results}

In this paper we develop the method of Lagrangian duality for $p$-modulus. More precisely, 
\begin{itemize}
\item we develop  Lagrangian duality for $p$-modulus (Section \ref{sec:cvxopt} and \ref{sec:lagrangian}) and 
\vt
\item discuss the notion of minimal subfamilies (Section \ref{sec:beurlingsbf}), which leads to the identification of a set of  important elements of a family $\Ga$ (Theorem \ref{thm:minimal}).
\vt
\item We then work out the case $p=2$ more explicitly  (Section \ref{sec:caseptwo}) and provide a probabilistic interpretation of modulus, the extremal density and the dual problem (Section \ref{sec:prob-interp}).  
\vt
\item 
 Next we discuss the basic algorithm for computing modulus numerically (Section \ref{sec:alg-basics}), and
we give some examples of minimal subfamilies for connecting modulus as well as spanning tree modulus (Section \ref{sec:most-important-walks}).
\end{itemize}

We acknowledge the anonymous referee for many helpful suggestions that have improved the exposition.

\section{Lagrangian duality for $p$-modulus}

\subsection{$p$-Modulus as an ordinary convex program}\label{sec:cvxopt}

To every object $\gamma\in\Gamma$ we associate  a point in  $\R_{\ge 0}^{E}$ via the correspondence
\begin{equation*}
\gamma\mapsto \cN(\gamma) := \left[\cN(\gamma,e_j)\right]_{j=1,\dots,|E|},
\end{equation*}
where we think of $\cN(\ga)$ as a row vector. 
We can define a partial order  by setting 
\[\gamma_1\preceq \gamma_2 \quad\iff\quad \cN(\gamma_1)\preceq \cN(\gamma_2)\]
where the order on vectors is the usual coordinate-wise order.

A density $\rho$ is admissible for $\Gamma$ if and only if $\rho$ belongs to the half-space 
\[
\{\rho'\in\R^{E}: \cN(\ga) \rho' \geq 1\},
\] 
for every $\ga\in\Ga$.
In particular, $\Adm(\Ga)$ is a convex subset of $\R^{E}$.
 Also the energy $\cE_{\si,p}(\rho)$ is a convex function of $\rho$. So computing $\Mod_p(\Ga)$ consists in solving the following standard convex program.
\begin{equation}
  \label{eq:cvx-opt-orig}
  \begin{split}
    \text{\bf minimize}\qquad &\cE_{\si,p}(\rho)\\
    \text{\bf subject to}\qquad & \cN \rho\geq 1
  \end{split}
\end{equation}
Existence of a minimizer follows from compactness of the $p$-norm ball
in $\mathbb{R}^{E}$ and continuity of
$\cE_{\sigma,p}(\cdot)$. Uniqueness holds when $1<p<\infty$ by strict
convexity of the objective function.

Given an admissible density $\rho\in\Adm(\Ga)$ there is a very useful criterion due to Beurling, who developed it  in function theory, see \cite{ahlfors1973} and also \cite{badger:2013finn}, to determine whether $\rho$ is extremal. We will see in Theorem~\ref{thm:minimal}~(ii) that this criterion provides a characterization of extremal densities.

If $\Ga$ is a family of objects, and $\tilde{\Ga}\subset\Ga$ is a subfamily, we write $\cN(\tilde{\Ga})$ for the submatrix of $\cN$ corresponding to the rows that are in $\tilde{\Ga}$.
\begin{theorem}[Beurling's Criterion for Extremality]
\label{thm:beurling}
Let $G$ be a weighted graph, $\Ga$ a family of objects on $G$, and $1<p<\infty$.  

Then, a density $\rho\in \Adm(\Ga)$ is extremal for $\Mod_p(\Ga)$, if there is $\tilde{\Ga}\subset\Ga$ with $\ell_\rho(\ga)=1$ for all $\ga\in \tilde{\Ga}$, i.e., $\cN(\tilde{\Ga}) \rho =\one$, such that: 
\begin{equation}
\label{eq:beurling}
\mbox{whenever $h\in\R^E$ and $\cN(\tilde{\Ga}) h\geq 0$, then $\sum_{e\in E}h(e)\rho^{p-1}(e)\si(e)\geq 0$.}
\end{equation}
Furthermore, for such a subfamily $\tilde{\Ga}$ we have
\begin{equation}\label{eq:beurlingmod}
\Mod_{p,\si}(\tilde{\Ga})=\Mod_{p,\si}(\Ga).
\end{equation}
\end{theorem}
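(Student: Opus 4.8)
The plan is to verify that the Beurling condition forces $\rho$ to minimize the $p$-energy, first among densities admissible for the full family $\Ga$, and then to deduce the equality of moduli for $\Ga$ and the witnessing subfamily $\tilde\Ga$. The key observation is that $\cE_{p,\si}$ is convex and differentiable on the interior of its domain (since $1<p<\infty$), so it suffices to check a first-order optimality condition: I want to show that for any competitor $\rho'\in\Adm(\Ga)$, the directional derivative of $\cE_{p,\si}$ at $\rho$ in the direction $h:=\rho'-\rho$ is nonnegative. A direct computation gives
\[
\frac{d}{dt}\Big|_{t=0}\cE_{p,\si}(\rho+th)=p\sum_{e\in E}h(e)\,\rho^{p-1}(e)\,\si(e),
\]
so convexity will yield $\cE_{p,\si}(\rho')\ge\cE_{p,\si}(\rho)$ as soon as this sum is $\ge 0$.

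The main step is therefore to verify the hypothesis of the Beurling condition \eqref{eq:beurling} applies to this particular $h$, i.e.\ that $\cN(\tilde\Ga)h\ge 0$. Here I would use both that $\rho$ satisfies $\cN(\tilde\Ga)\rho=\one$ (the tightness on $\tilde\Ga$) and that $\rho'\in\Adm(\Ga)$, hence $\cN(\tilde\Ga)\rho'\ge\one$ since $\tilde\Ga\subset\Ga$. Subtracting,
\[
\cN(\tilde\Ga)h=\cN(\tilde\Ga)\rho'-\cN(\tilde\Ga)\rho\ge\one-\one=0.
\]
With $\cN(\tilde\Ga)h\ge 0$ in hand, condition \eqref{eq:beurling} gives exactly $\sum_e h(e)\rho^{p-1}(e)\si(e)\ge 0$, and convexity finishes the first claim that $\rho$ is extremal for $\Mod_p(\Ga)$.

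For the equality \eqref{eq:beurlingmod}, I would argue by monotonicity (Remark~\ref{rem:remarks}~(b)) together with the fact that the \emph{same} $\rho$ turns out to be extremal for both problems. Since $\tilde\Ga\subset\Ga$, we have $\Mod_{p,\si}(\tilde\Ga)\le\Mod_{p,\si}(\Ga)$ automatically. For the reverse inequality, observe that $\rho$ is itself admissible for $\tilde\Ga$ (indeed $\cN(\tilde\Ga)\rho=\one\ge\one$), and I claim the Beurling hypothesis, read for the family $\tilde\Ga$ with witnessing subfamily $\tilde\Ga$ itself, holds verbatim---it is literally the same condition \eqref{eq:beurling}. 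Hence $\rho$ is also extremal for $\Mod_p(\tilde\Ga)$, so $\Mod_{p,\si}(\tilde\Ga)=\cE_{p,\si}(\rho)=\Mod_{p,\si}(\Ga)$, giving the matching lower bound. The only subtlety I anticipate is the boundary case where some coordinate $\rho(e)=0$ and $p<2$, where $\rho^{p-1}(e)$ and the differentiability of $t\mapsto\cE_{p,\si}(\rho+th)$ at $t=0$ require care; I would handle this by restricting to the edges where $\rho>0$ or by a standard one-sided limit argument, noting that the convexity inequality $\cE_{p,\si}(\rho')-\cE_{p,\si}(\rho)\ge p\sum_e h(e)\rho^{p-1}(e)\si(e)$ remains valid as a subgradient inequality even at such boundary points.
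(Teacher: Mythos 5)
Your proof is correct, and its skeleton matches the paper's: take an arbitrary competitor $\rho'\in\Adm(\Ga)$, set $h=\rho'-\rho$, deduce $\cN(\tilde{\Ga})h\ge 0$ from tightness on $\tilde{\Ga}$ plus admissibility of $\rho'$, and feed this $h$ into \eqref{eq:beurling}; likewise your treatment of \eqref{eq:beurlingmod} (monotonicity one way, self-application of the criterion to the family $\tilde{\Ga}$ the other way) is essentially the paper's observation that the argument runs verbatim for competitors admissible for $\tilde{\Ga}$. Where you genuinely diverge is the closing step. The paper never differentiates the energy: from $\sum_e \rho'(e)\rho^{p-1}(e)\si(e)\ge \cE_{p,\si}(\rho)$ it applies H\"{o}lder's inequality to obtain $\cE_{p,\si}(\rho)\le \cE_{p,\si}(\rho')^{1/p}\,\cE_{p,\si}(\rho)^{1-1/p}$ and divides. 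You instead invoke the first-order convexity (subgradient) inequality $\cE_{p,\si}(\rho')\ge \cE_{p,\si}(\rho)+p\sum_e h(e)\rho^{p-1}(e)\si(e)$. Both are valid; yours is the more conceptual ``nonnegative directional derivative at a feasible point of a convex function implies global minimality'' argument and generalizes to any smooth convex objective, while the paper's H\"{o}lder route is more elementary and entirely sidesteps the boundary issue you rightly flag: when $1<p<2$ and $\rho(e)=0$ one needs that $t\mapsto|t|^p$ is still $C^1$ with vanishing derivative at $0$ (true precisely because $p>1$, so $\rho^{p-1}(e)=0$ is the correct gradient entry), or one falls back on the subgradient inequality, exactly as you indicate. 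So your extra care there is warranted and resolves the only delicate point in your version.
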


Note that $h\in\R^E$ and not $\R_{\ge 0}^E$. 
The proof is very simple and well-known, so we reproduce it here for completeness.

\begin{proof}[Proof of Theorem \ref{thm:beurling}]
Let $\tilde{\rho}$ be an arbitrary admissible density for $\Ga$. Set $h \defeq \tilde{\rho} - \rho$. If $\gamma \in \tilde{\Gamma}$, then 
\[
\ell_h(\ga)=\ell_{\tilde{\rho}}(\ga)-\ell_\rho(\ga)= \ell_{\tilde{\rho}}(\ga)-1 \geq 0.
\]
By assumption (\ref{eq:beurling}), this implies  $\sum_{e\in E} h(e)\rho^{p-1} (e)\si(e) \ge 0$. Hence, \[\sum \tilde{\rho}(e)\rho^{p-1}(e)\si(e) - \sum \rho^p(e)\si(e) \ge 0\] 
So by H\"{o}lder's inequality:
\[
\cE_{p,\si}(\rho) =\sum_{e\in E} \si(e)\rho(e)^p \le \sum_{e\in E} \tilde{\rho}(e)\rho^{p-1}(e)\sigma(e) \le \left(\sum_{e\in E} \tilde{\rho}^p(e)\si(e)\right)^{1/p} \left(\sum_{e\in E}\rho^p(e)\si(e)\right)^{1-1/p}
\]
Dividing both sides $\cE_{p,\si}(\rho)^{1-1/p}$ and raising to the $p$-th power, we get that $\cE_{p,\si}(\rho) \le \cE_{p,\si}(\tilde{\rho})$.

To prove (\ref{eq:beurlingmod}), note that  (\ref{eq:beurling}) only requires  $\cN(\tilde{\Ga})(\tilde{\rho}-\rho)\geq 0$, so the same argument above holds verbatim for $\tilde{\rho}$ an arbitrary admissible density for $\tilde{\Ga}$. Therefore, we have $\cE_{p,\si}(\rho)\leq\cE_{p,\si}(\tilde{\rho})$, and $\Mod_{p,\si}(\Ga)\leq\Mod_{p,\si}(\tilde{\Ga})$. The other direction holds by monotonicity, see Remark \ref{rem:remarks} (b).
\end{proof}

\subsection{Families of walks}

In general families of walks on graphs are infinite. For instance, there are infinitely many walks connecting two distinct vertices $s$ and $t$ in a connected graph. Therefore, in this case, the optimization problem in (\ref{eq:cvx-opt-orig}) is subject to infinitely many constraints (one per walk). However, in \cite{adsgpc} we have shown that one can always pick finitely many constraints so as to have the same convex program. This follows from the following result about integer lattices.
\begin{theorem}[\cite{adsgpc}]\label{thm:essential}
Let $X\subset\mathbb{Z}_{\ge 0}^{n}$.  Then  there exists $ X^*\subseteq X$ such that $X^*$ is finite and such that 
for every $x\in X$ there exists $x^*\in X^*$ such that $x^*\preceq x$.   In particular, every family of walks $\Gamma$ admits a finite subfamily $\Gamma^*\subseteq\Gamma$ such that $\Adm(\Gamma^*)=\Adm(\Gamma)$.
\end{theorem}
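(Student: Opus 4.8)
The lattice statement about $X\subseteq\Z_{\ge 0}^n$ is an instance of Dickson's Lemma: the coordinate-wise order $\preceq$ makes $\Z_{\ge 0}^n$ a well-partial-order. My plan is to let $X^*$ be the set of $\preceq$-minimal elements of $X$ and then verify the two properties that make this choice succeed, namely that (i) every $x\in X$ dominates some minimal element and (ii) the set of minimal elements is finite.

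Property (i) I would derive from well-foundedness. Given $x\in X$, the set $\{y\in X:y\preceq x\}$ sits inside the finite box $\{y\in\Z_{\ge 0}^n:y\preceq x\}$, which has $\prod_{i=1}^n(x_i+1)$ points; being finite and nonempty (it contains $x$), it has a $\preceq$-minimal element $x^*$. A one-line check shows that $x^*$ is then minimal in all of $X$, and $x^*\preceq x$ holds by construction.

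Property (ii) is where the real content lies, and I expect it to be the main obstacle. Since the minimal elements of $X$ form an antichain, it suffices to show that $\Z_{\ge 0}^n$ has no infinite antichain. I would prove this by induction on $n$ via the sequence characterization of well-partial-orders: a quasi-order is a well-partial-order exactly when every infinite sequence $(x^{(k)})_{k\ge 1}$ contains indices $i<j$ with $x^{(i)}\preceq x^{(j)}$. The case $n=1$ is clear because $\Z_{\ge 0}$ is totally ordered. For the inductive step, given an infinite sequence in $\Z_{\ge 0}^n$, I first extract a subsequence along which the first coordinate is non-decreasing (possible since $\Z_{\ge 0}$ is well-ordered) and then apply the inductive hypothesis to the last $n-1$ coordinates of that subsequence to produce a comparable pair. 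An infinite antichain would contradict this property, so every antichain, and in particular $X^*$, is finite.

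For the statement about walks I would take $X=\{\cN(\ga):\ga\in\Ga\}\subseteq\Z_{\ge 0}^E$, apply the lattice result to obtain a finite $X^*$, and select for each vector in $X^*$ one walk $\ga^*\in\Ga$ realizing it, giving a finite subfamily $\Ga^*$. The inclusion $\Adm(\Ga)\subseteq\Adm(\Ga^*)$ holds because $\Ga^*\subseteq\Ga$. For the reverse inclusion, let $\rho\in\Adm(\Ga^*)$ and $\ga\in\Ga$; choosing $\ga^*\in\Ga^*$ with $\cN(\ga^*)\preceq\cN(\ga)$ and using $\rho\ge 0$, we get $\ell_\rho(\ga)=\cN(\ga)\rho\ge\cN(\ga^*)\rho=\ell_\rho(\ga^*)\ge 1$, so $\rho\in\Adm(\Ga)$, completing the argument.
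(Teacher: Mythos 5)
Your proof is correct, but there is nothing in this paper to compare it against: Theorem~\ref{thm:essential} is quoted from the preprint \cite{adsgpc}, and the present paper gives no proof of it --- it only uses the conclusion, namely that one may pass to a finite essential subfamily before setting up the Lagrangian machinery. Judged on its own, your argument is the standard proof of Dickson's Lemma and it is sound. The choice $X^*=\{\text{$\preceq$-minimal elements of }X\}$ is the right one; your verification of (i) correctly combines finiteness of the box $\{y\in\Z_{\ge 0}^n : y\preceq x\}$ with the observation that an element minimal in $\{y\in X : y\preceq x\}$ is automatically minimal in all of $X$; and your induction for (ii) --- pass to a subsequence with non-decreasing first coordinate, then apply the inductive hypothesis to the remaining $n-1$ coordinates to produce a comparable pair --- is the classical argument that $(\Z_{\ge 0}^n,\preceq)$ is a well-partial-order and hence has no infinite antichain. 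The deduction for walk families is also complete: the chain $\ell_\rho(\gamma)=\cN(\gamma)\rho\ge \cN(\gamma^*)\rho=\ell_\rho(\gamma^*)\ge 1$ is exactly where nonnegativity of $\rho$ and integrality of walk usage vectors (Definition~\ref{def:matrixn}) enter, and the inclusion $\Adm(\Gamma)\subseteq\Adm(\Gamma^*)$ is indeed trivial from $\Gamma^*\subseteq\Gamma$. One small point worth spelling out: the claim that every infinite sequence in $\Z_{\ge 0}$ admits a non-decreasing subsequence deserves its one-line justification (either some value occurs infinitely often, giving a constant subsequence, or the values are unbounded, giving a strictly increasing one); with that inserted, the proof is complete as written.
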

Since the subfamily $\Ga^*$ has the same admissible densities as $\Ga$, the minimal energy and the extremal density that solve problem (\ref{eq:cvx-opt-orig}) are unchanged if we replace $\Ga$ with $\Ga^*$.

\begin{definition}
If $\Ga^*\subset \Ga$ is finite and $\Adm(\Ga^*)=\Adm(\Ga)$ we say that $\Ga^*$ is an {\it essential subfamily}. 
\end{definition}
Note that, since the admissible set~\eqref{eq:Adm} does not depend on
$p$ or $\si$, neither does the notion of essential subfamily.  The
following example shows that essential subfamilies can be arbitrarily
large even on a graph with only two edges.

\begin{example}\label{example}
Consider the unweighted path graph with two edges $e_1$ and $e_2$ in series, and let $n=1,3,5,...$ be an odd integer. Consider the family of walks $\Ga_n=\{\ga_{n,k}\}_{k=0}^{(n-1)/2}$ where 
\[
\cN(\ga_{n,k},e_1)=n-2k  \qquad\mbox{and}\qquad\cN(\ga_{n,k},e_2)=f_n(n-2k)
\]
where $f_n(n)=n$ and 
for $k=1,\dots,(n-1)/2$,
where $f_n(n-2k)\defeq n+2\sum_{j=1}^k (j+1)$. Notice that
\begin{equation}\label{eq:recurrence}
f_n(n-2(k+1))=f_n(n-2k)+2(k+2).
\end{equation}
and $2(k+2)$ is even. So, since $f_n(n)=n$ is odd, we have that $f_n(n-2k)$ is also odd for all $k$'s. By a parity argument this shows that there indeed exist walks $\ga_{n,k}$ with multiplicity vector $\cN(\ga_{n,k})$ as defined above.

We now study the half-planes $\left\{\rho: \cN(\ga_{n,k})\rho\geq 1\right\}$. To find how they meet we solve the system
\[
\begin{array}{r}
(1)\\
(2)
\end{array}
\left\{
\begin{array}{lllr}
x(n-2k) &+ & yf_n(n-2k) & =1\\
x(n-2k-2) & + & y\left[f_n(n-2k)+2(k+2)\right] & =1
\end{array}
\right.
\]
where $x=\rho(e_1)$ and $y=\rho(e_2)$.
Replacing $(2)$ by $(2)-(1)$ and making a simple substitution yields the solution $x_k=(k+2)y_k$ and $y_k=[(k+2)(n-2k)+f_n(n-2k)]^{-1}$. In particular, $y_k>0$ and thus $x_k>0$. The line defined by equation $(1)$ becomes more and more horizontal as $k$ varies, since $f_n(n-2k)$ grows faster than $n-2k$ decreases. If we can show that $y_k$ decreases we will have shown that the admissible set $\Adm(\Ga_n)$ is an unbounded polygon with at least $(n+1)/2$ faces. A calculation using (\ref{eq:recurrence}) yields
\[
\frac{1}{y_{k}}-\frac{1}{y_{k-1}}=n-2k>0
\]
for $k=1,\dots,(n-1)/2$. So the only essential subfamily of $\Ga_n$ is itself and the cardinality of $\Ga_n$ can be made arbitrarily large.

However,  $\Mod_p(\Ga_n)=\Mod_p(\tilde{\Ga}_n)$ where
$\tilde{\Ga}_n=\{\ga_{n,0}\}$ consists of the single walk $\ga_{n,0}$ such that
$\cN(\ga_{n,0},e_1)=\cN(\ga_{n,0},e_2)=n$. To verify this is a simple application of Beurling's criterion Theorem \ref{thm:beurling}.
Consider the constant density $\rho(e_j)=1/(2n)$, for $j=1,2$. Then, $\ell_\rho(\ga_{n,0})=1$. Also, for $k=1,\dots,(n-1)/2$,
\[
\ell_\rho(\ga_{n,k})=\frac{1}{2n}\left[n-2k+n+2k+k(k+1)\right]\geq 1
\]
So $\rho$ is admissible for $\Ga_n$. Moreover, if $h:E\rightarrow\R$ satisfies $\ell_h(\ga_{n,0})\geq 0$, then
$nh(e_1)+nh(e_2)\geq 0$, so $h(e_1)+h(e_2)\geq 0$. 
Therefore,
\[
\sum_{e\in E}h(e)\rho^{p-1}(e)=(2n)^{1-p}h(e_1)+(2n)^{1-p}h(e_2)=(2n)^{1-p}\left(h(e_1)+h(e_2)\right)\geq 0.
\]

The subfamily $\tilde{\Ga}_n$ is an
example of a minimal subfamily, as will be defined later in Definition \ref{def:minimal}.
\end{example}

Theorem \ref{thm:essential} implies that without loss of generality, after passing to an essential subfamily, modulus becomes an ordinary convex program. In particular, we can formulate the dual problem. 

\subsection{The Lagrangian for modulus}\label{sec:lagrangian}
For the remainder of the paper we assume that $\Ga$ is a finite family
of objects.  One may introduce the dual variables
$\lambda:\Gamma\to\mathbb{R}_{\ge 0}$, that is to say a vector
$\la\in\R^{\Ga}_{\ge 0}$, and consider the Lagrangian for the minimization
problem (\ref{eq:cvx-opt-orig}):
\begin{equation*}
  \label{eq:lagrangian}
  L(\rho,\lambda):= \sum_{e\in E}\sigma(e)\left|\rho(e)\right|^p
  + \sum_{\gamma\in\Ga}\lambda(\gamma)\left(1-\sum_{e\in E}\cN(\gamma,e)\rho(e)\right)
\end{equation*}

Fixing $\rho\in\R^{E}$ and maximizing in $\la\in[0,+\infty)^{\Ga}$ gives
\[
f(\rho)\defeq\sup_{\la\geq 0} L(\rho,\la)=\left\{\begin{array}{cl}
{\displaystyle \sum_{e\in E}\sigma(e)\left|\rho(e)\right|^p} & \qquad \mbox{for $\rho\in \Adm(\Ga)$}\\
\infty & \qquad \mbox{for $\rho\not\in \Adm(\Ga)$}
\end{array}     \right.
\]
Therefore, with the primal problem we recover the modulus problem:
\[
\Mod_p(\Ga)=p^*\defeq\inf_{\rho\in\R^{E}} f(\rho).
\]
Note that the latter minimization is unconstrained. 

We now apply the general fact that:
if $F:X\times Y\to\R$, then 
\[
\Inf_{x\in X}\sup_{y\in Y} F(x,y)\geq \sup_{y\in Y}\Inf_{x\in X} F(x,y)
\]

Fixing $\la\in[0,+\infty)^{\Ga}$ and minimizing in $\rho\in\R^{E}$ gives
\[
g(\la)\defeq\inf_{\rho} L(\rho,\la)
\]
Then, the primal problem is bounded below by the dual problem (this is called {\it weak duality}):
\[
p^*\geq d^*\defeq \sup_{\la\geq0}\Inf_\rho L(\rho,\la)=\sup_{\la\geq 0}g(\la)
\]
\begin{definition}
{\it Strong duality} occurs if there exists $\rho^*$ and $\la^*\geq 0$ such that the following saddle-point property holds.
\begin{equation}\label{eq:strongduality}
L(\rho^*,\la)\leq L(\rho^*,\la^*)\leq L(\rho,\la^*) 
\end{equation}
for every $\rho$ and every $\lambda\geq 0$.
\end{definition}
If strong duality holds, then, for $\rho^*$ and $\lambda^*$ satisfying~\eqref{eq:strongduality},
\[
p^*\leq f(\rho^*)=\sup_{\la\geq 0} L(\rho^*,\la)=L(\rho^*,\la^*)= \inf_{\rho}  L(\rho,\la^*)=g(\la^*)   \leq d^*
\]
Therefore in this case $p^*=d^*$, that is the primal and dual problem coincide. 

There are well-known sufficient conditions for strong duality to hold.
One condition that applies in our convex case is  Slater's condition (see also \cite[Theorem 28.2]{Rockafellar1970}). 
\begin{fact}[Slater]
Strong duality holds if the primal feasible region has an interior point.
\end{fact}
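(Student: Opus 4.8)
The plan is to prove strong duality by producing a dual-optimal multiplier $\la^*$ through a supporting-hyperplane argument, and then to check that the primal optimizer $\rho^*$ (which exists by the compactness/coercivity remarks preceding this statement) together with $\la^*$ satisfies the saddle-point inequality~\eqref{eq:strongduality}. The interior (Slater) hypothesis will be used at exactly one place, to exclude a degenerate ``vertical'' supporting hyperplane.

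First I would set up the standard perturbation set. Write $g_\ga(\rho)\defeq 1-\cN(\ga)\rho$ for the constraint functions, so that $\rho\in\Adm(\Ga)$ iff $g_\ga(\rho)\le 0$ for all $\ga\in\Ga$, and consider
\[
\cA\defeq\left\{(u,t)\in\R^\Ga\times\R:\ \exists\,\rho\in\R^E\ \text{with}\ g_\ga(\rho)\le u(\ga)\ \forall\ga,\ \cE_{p,\si}(\rho)\le t\right\}.
\]
Since $\cE_{p,\si}$ is convex and each $g_\ga$ is affine, $\cA$ is a convex subset of $\R^\Ga\times\R$, and by construction it is closed upward: if $(u,t)\in\cA$ and $(u',t')\ge(u,t)$ coordinatewise, then $(u',t')\in\cA$. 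The primal value is $p^*=\inf\{t:(0,t)\in\cA\}$, which is finite because $\Adm(\Ga)$ is nonempty (it contains the Slater point) and $\cE_{p,\si}\ge 0$; moreover the points $(0,t)$ with $t<p^*$ are not in $\cA$, so $(0,p^*)\in\bd{\cA}$. Applying the supporting hyperplane theorem at $(0,p^*)$ produces a nonzero functional $(\la,\mu)\in\R^\Ga\times\R$ with
\[
\sum_{\ga\in\Ga}\la(\ga)u(\ga)+\mu t\ \ge\ \mu\,p^*\qquad\text{for all }(u,t)\in\cA.
\]
Because $\cA$ is closed upward, the left-hand side must be bounded below along increasing directions, which forces $\la\ge 0$ and $\mu\ge 0$.

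The crux, and the step I expect to be the main obstacle, is to rule out $\mu=0$; this is precisely where the interior hypothesis is indispensable. Let $\rho_0$ be an interior point of $\Adm(\Ga)$, so that $g_\ga(\rho_0)<0$ for every $\ga$ (each $g_\ga$ is affine and nonconstant, and there are only finitely many constraints). If $\mu=0$, then taking $u(\ga)=g_\ga(\rho_0)$ and $t$ large shows $(u,t)\in\cA$, and the supporting inequality reduces to $\sum_\ga\la(\ga)g_\ga(\rho_0)\ge 0$; but every term is $\le 0$, forcing $\la=0$ and hence $(\la,\mu)=0$, a contradiction. Therefore $\mu>0$, and after normalizing $\mu=1$ we set $\la^*\defeq\la\ge 0$. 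Evaluating the supporting inequality along $(g_\ga(\rho),\cE_{p,\si}(\rho))\in\cA$ for arbitrary $\rho$ gives $\cE_{p,\si}(\rho)+\sum_\ga\la^*(\ga)g_\ga(\rho)\ge p^*$, that is $L(\rho,\la^*)\ge p^*$ for all $\rho$, whence $g(\la^*)=\inf_\rho L(\rho,\la^*)\ge p^*$. Combined with weak duality $g(\la^*)\le d^*\le p^*$, this yields $g(\la^*)=d^*=p^*$, so $\la^*$ is dual-optimal and strong duality $p^*=d^*$ holds.

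Finally I would recover the saddle-point form~\eqref{eq:strongduality} from complementary slackness. Let $\rho^*$ be a primal minimizer, so $\cE_{p,\si}(\rho^*)=p^*$ and $g_\ga(\rho^*)\le 0$. Since $\la^*\ge 0$ we get $L(\rho^*,\la^*)=\cE_{p,\si}(\rho^*)+\sum_\ga\la^*(\ga)g_\ga(\rho^*)\le p^*$, while $L(\rho^*,\la^*)\ge\inf_\rho L(\rho,\la^*)=g(\la^*)=p^*$; hence $L(\rho^*,\la^*)=p^*$ and $\sum_\ga\la^*(\ga)g_\ga(\rho^*)=0$. The left inequality in~\eqref{eq:strongduality} then follows because $L(\rho^*,\la)=\cE_{p,\si}(\rho^*)+\sum_\ga\la(\ga)g_\ga(\rho^*)\le\cE_{p,\si}(\rho^*)=L(\rho^*,\la^*)$ for every $\la\ge 0$, and the right inequality follows because $L(\rho^*,\la^*)=p^*=g(\la^*)=\inf_\rho L(\rho,\la^*)\le L(\rho,\la^*)$ for every $\rho$. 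This completes the verification of the saddle-point property and hence of strong duality.
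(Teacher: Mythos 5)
Your proof is correct, but there is nothing in the paper to compare it against: the paper treats this Fact as a known result and simply cites \cite[Theorem~28.2]{Rockafellar1970}, offering no internal proof. What you have written is the standard geometric proof of Slater's theorem, and it is essentially the argument underlying the cited theorem rather than a genuinely different route: your set $\cA$ is the epigraph of the primal value (perturbation) function, the supporting hyperplane at $(0,p^*)$ is a subgradient of that value function at the origin, and the Slater point enters exactly where it must, to exclude a vertical hyperplane ($\mu=0$). So your proposal buys self-containedness where the paper buys brevity. Two details you handled correctly that are worth emphasizing: (i) the paper's definition of strong duality is the saddle-point property \eqref{eq:strongduality}, not merely the equality $p^*=d^*$, so primal attainment is needed; you rightly invoke the existence of the minimizer $\rho^*$ (which the paper guarantees by compactness just after \eqref{eq:cvx-opt-orig}) and pass through complementary slackness to obtain both inequalities in \eqref{eq:strongduality}. (ii) You dualize only the constraints $\cN\rho\ge\one$, matching the paper's Lagrangian, and your reading of the interior-point hypothesis as $g_\ga(\rho_0)<0$ for every $\ga$ is legitimate because each row $\cN(\ga)$ is nonzero (Definition~\ref{def:matrixn}) and $\Ga$ is assumed finite in this section, so an interior point of the feasible region must satisfy every constraint strictly.
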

For $\Mod_p$ it is enough to find $\rho$ such that $\ell_\rho(\ga)>1$
for all $\ga\in\Ga$.  Thus, for families of objects such as walks or
subsets of $E$, it is enough to let $\rho\equiv 2$. More generally, if
$\Ga$ is a finite family of objects that are allowed to make
fractional use of an edge, it's enough to set $\rho\equiv
\cN_{\text{min}}^{-1}$, with $\cN_{\text{min}}$ as defined in
Remark~\ref{rem:remarks} (c).

As a consequence we obtain a new formula for modulus:
\begin{equation}\label{eq:dualformula}
\Mod_p(\Ga)= \sup_{\la\geq0}\Inf_\rho \left\{\sum_{e\in E}\sigma(e)\left|\rho(e)\right|^p
  + \sum_{\gamma\in\Ga}\lambda(\gamma)\left[1-\sum_{e\in E}\cN(\gamma,e)\rho(e)\right]\right\}
\end{equation}

We now rewrite the dual problem using standard convex optimization techniques. 

For a fixed $\lambda$, the Lagrangian $L$ is minimized by a density $\rho_\lambda$
which satisfies the following stationarity condition:
\begin{equation}
  \label{eq:stationarity}
  p\sigma(e)\rho_\lambda(e)\left|\rho_\lambda(e)\right|^{p-2} - \sum_\gamma\lambda(\gamma)\cN(\gamma,e) = 0, 
\end{equation}
for every $e\in E$.

Dual feasibility implies that $\lambda\ge 0$, so (\ref{eq:stationarity}) implies that $\rho_\lambda\ge 0$ as well, and therefore
\begin{equation}
  \label{eq:optimal-rho-lambda}
  \rho_\lambda(e) = \left(\frac{1}{p\sigma(e)}\sum_\gamma\lambda(\gamma)\cN(\gamma,e)\right)^{\frac{1}{p-1}}= \left(\frac{1}{p\sigma(e)}(\cN^T \la)(e)\right)^{\frac{1}{p-1}}
\end{equation}
Substituting (\ref{eq:optimal-rho-lambda}) into (\ref{eq:dualformula}) 
we get the following dual problem:
\begin{equation}\label{eq:dualproblem}
  \begin{split}
    \text{\bf maximize} &\qquad g(\lambda)=\sum_\gamma\lambda(\gamma) -(p-1)\sum_e\sigma(e)\left(\frac{1}{p\sigma(e)}\sum_\gamma\lambda(\gamma)\cN(\gamma,e)\right)^{\frac{p}{p-1}}\\
    \text{\bf subject to} &\qquad \lambda(\gamma)\ge 0\qquad\forall\gamma\in\Gamma.
  \end{split}
\end{equation}
\begin{remark}
Note that $\la$ appears in (\ref{eq:dualproblem}) composed with a linear operation, which may have a kernel. So in general the solutions to the dual problem are not unique.  
However, if $\lambda^*$ is a solution to (\ref{eq:dualproblem}), then (\ref{eq:optimal-rho-lambda}) gives a solution $\rho^*$ to
 the primal problem (\ref{eq:cvx-opt-orig}) written in terms of $\la^*$ (and we already know that $\rho^*$ is unique).

\end{remark}

\section{Minimal subfamilies}\label{sec:beurlingsbf}

First, we define Beurling and Lagrangian subfamilies.

\begin{definition}\label{def:beurlingsf}
Let $\Ga$ be a family of objects on a weighted graph $G$ and $p\in (1,\infty)$. We say that a subfamily $\tilde{\Ga}\subset\Ga$ is a {\it Beurling subfamily of $\Ga$ for $\Mod_{p,\si}(\Ga)$}, if the unique extremal density $\rho^*$ yields $\ell_{\rho^*}(\ga)=1$ for all $\ga\in\tilde{\Ga}$ and property (\ref{eq:beurling}) holds for $\rho^*$. In particular, by (\ref{eq:beurlingmod}), we always have $\Mod_{p,\si}(\tilde{\Ga})=\Mod_{p,\si}(\Ga)$.
\end{definition}

\begin{definition}
Let $p\in (1,\infty)$ and let $\lambda^*$  be optimal for the dual problem {\rm (\ref{eq:dualproblem})}. We say that
  \begin{equation}\label{eq:Gammalambdastar}
    \Gamma_{\lambda^*} \defeq \left\{\gamma\in \Ga: \lambda^*(\gamma)>0\right\}  
  \end{equation}
is the {\it Lagrangian subfamily} of $\Ga$ associated to $\lambda^*$.  
\end{definition}

Since problem (\ref{eq:cvx-opt-orig}) is convex, sufficiently smooth,
and exhibits strong duality, the Karush-Kuhn-Tucker (KKT) conditions
provide necessary and sufficient conditions for optimality.  In order
to formulate the KKT conditions for the present problem, enumerate the
edges in $E$ as $\{e_j\}_{j=1}^m$ and the objects in $\Ga$ as
$\{\ga_i\}_{i=1}^k$ and let $\la_i=\la(\ga_i)$. Stated for problem
(\ref{eq:cvx-opt-orig}), the KKT conditions ensure the existence of an
optimal $\rho^*\in\R_{\ge 0}^m$ and dual optimal
$\lambda^*\in\mathbb{R}^k_{\ge 0}$~\cite[Theorem
28.3]{Rockafellar1970} satisfying
\begin{align}
  \lambda^*_i \ge 0,\quad 1 - \ell_{\rho^*}(\gamma_i) \le 0
  \quad  \text{for }i=1,2,\ldots,k & \qquad\mbox{\bf (Dual and Primal Feasibility)} \\
  \lambda^*_i\left(1 - \ell_{\rho^*}(\gamma_i)\right) = 0\quad
  \text{for }i=1,2,\ldots,k & \qquad\mbox{\bf (Complementary Slackness)}\label{eq:comp-slack}\\
  \nabla_\rho L(\rho^*,\lambda^*) = 0 & \qquad\mbox{\bf (Stationarity)}\label{eq:kktstationarity}
\end{align}

Here the density $\rho^*$ is the unique minimizer of~\eqref{eq:cvx-opt-orig}, while $\lambda^*$ is a (possibly non-unique) maximizer for~\eqref{eq:dualproblem}.

\begin{proposition}\label{prop:bcrit}
Let $p\in (1,\infty)$ and let $(\rho^*,\lambda^*)$ be a saddle point for the
Lagrangian.  Then the corresponding Lagrangian subfamily $\Gamma_{\lambda^*}$ is a Beurling subfamily, i.e.,
satisfies Beurling's criterion for $\rho^*$. Namely:
\bi
\item[(a)] $\ell_{\rho^*}(\ga)=1$ for all $\ga\in\Ga_{\lambda^*}$ and 
\item[(b)] for every $h:E\rightarrow\R$ with  $\ell_h(\Ga_{\lambda^*})\geq 0$ we have
$\sum_{e\in E}h(e)\rho^*(e)^{p-1}\geq 0$.
\ei
\end{proposition}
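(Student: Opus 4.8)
The plan is to read both statements directly off the Karush--Kuhn--Tucker conditions, using that problem~\eqref{eq:cvx-opt-orig} is convex and, by Slater's condition, exhibits strong duality, so that the KKT system holds at the saddle point $(\rho^*,\lambda^*)$.

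Part (a) is immediate from complementary slackness. If $\ga\in\Gamma_{\lambda^*}$, then by definition~\eqref{eq:Gammalambdastar} we have $\lambda^*(\ga)>0$, and condition~\eqref{eq:comp-slack} forces $1-\ell_{\rho^*}(\ga)=0$, i.e.\ $\ell_{\rho^*}(\ga)=1$. No further work is needed.

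For part (b) the key input is the stationarity condition~\eqref{eq:kktstationarity}, which by~\eqref{eq:stationarity} reads, for each $e\in E$,
\[
p\,\si(e)\,\rho^*(e)\left|\rho^*(e)\right|^{p-2} = \sum_{\ga}\lambda^*(\ga)\cN(\ga,e).
\]
Since $\lambda^*\ge 0$, formula~\eqref{eq:optimal-rho-lambda} gives $\rho^*\ge 0$, so the left-hand side equals $p\,\si(e)\rho^*(e)^{p-1}$; and because $\lambda^*(\ga)=0$ for every $\ga\notin\Gamma_{\lambda^*}$, the sum on the right may be restricted to $\Gamma_{\lambda^*}$. Given any $h\in\R^E$ with $\ell_h(\ga)\ge 0$ for all $\ga\in\Gamma_{\lambda^*}$, I would multiply this identity by $h(e)$, sum over $e$, and interchange the order of summation:
\[
\sum_{e\in E} h(e)\,\rho^*(e)^{p-1}\si(e) = \frac{1}{p}\sum_{\ga\in\Gamma_{\lambda^*}}\lambda^*(\ga)\sum_{e\in E}\cN(\ga,e)h(e) = \frac{1}{p}\sum_{\ga\in\Gamma_{\lambda^*}}\lambda^*(\ga)\,\ell_h(\ga).
\]
Each $\lambda^*(\ga)>0$ on $\Gamma_{\lambda^*}$ and each $\ell_h(\ga)\ge 0$ by hypothesis, so the right-hand side is a nonnegative combination of nonnegative terms and is therefore $\ge 0$. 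Together with (a), this is precisely Beurling's criterion~\eqref{eq:beurling} for $\rho^*$ with $\tilde{\Ga}=\Gamma_{\lambda^*}$.

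There is no genuine obstacle here; both parts are formal consequences of KKT. The two points deserving care are: first, recognizing that dual feasibility propagates through~\eqref{eq:optimal-rho-lambda} to force $\rho^*\ge 0$, so that $\rho^*|\rho^*|^{p-2}=(\rho^*)^{p-1}$ and the exponent matches~\eqref{eq:beurling}; and second, observing that since $\lambda^*$ vanishes off $\Gamma_{\lambda^*}$, the hypothesis $\ell_h(\ga)\ge 0$ need only hold on the Lagrangian subfamily rather than on all of $\Ga$ --- it is exactly this localization that makes $\Gamma_{\lambda^*}$, and not merely $\Ga$, a Beurling subfamily. I would also note that the computation naturally produces the $\si(e)$-weighted sum appearing in~\eqref{eq:beurling}.
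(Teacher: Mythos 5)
Your proof is correct, and it rests on the same two KKT pillars as the paper's: part (a) is complementary slackness~\eqref{eq:comp-slack} in both, and part (b) comes from stationarity in both. The difference is in how stationarity is exploited. The paper expands the saddle-point inequality $0\le L(\rho^*+\mu h,\lambda^*)-L(\rho^*,\lambda^*)$ to first order in $\mu$ and extracts $p\sum_{e\in E}\si(e)\rho^*(e)^{p-1}h(e)\ge\sum_{\ga\in\Ga_{\lambda^*}}\la^*(\ga)\ell_h(\ga)\ge 0$, which involves $O(\mu^2)$ bookkeeping and a choice of sign for $\mu$. You instead quote the pointwise stationarity identity~\eqref{eq:stationarity} at $\la=\la^*$ (noting, just as the paper does when deriving~\eqref{eq:optimal-rho-lambda}, that dual feasibility forces $\rho^*\ge 0$, so $\rho^*|\rho^*|^{p-2}=(\rho^*)^{p-1}$), multiply by $h(e)$, and interchange the finite sums to get the exact identity $\sum_{e\in E}\si(e)h(e)\rho^*(e)^{p-1}=\frac{1}{p}\sum_{\ga\in\Ga_{\lambda^*}}\la^*(\ga)\ell_h(\ga)$, whose right-hand side is manifestly nonnegative under the hypothesis on $h$. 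The content is equivalent, but your version is cleaner: it yields an equality rather than an inequality and avoids the perturbation argument entirely. One further point in your favor: your computation produces the $\si$-weighted sum, which is what Beurling's criterion~\eqref{eq:beurling} actually requires; the statement of part (b) in the proposition drops $\si(e)$ (evidently a typo, or an implicit unweighted normalization), and both your argument and the paper's in fact establish the weighted version.
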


\begin{proof}[Proof of Proposition \ref{prop:bcrit}]
We are going to use the fact that $\rho^*$ and $\la^*$ satisfy the KKT conditions mentioned above.

Part (a) follows from complementary slackness, see (\ref{eq:comp-slack}). Namely, at a saddle point, for every $\ga\in\Ga$, either $\lambda^*(\ga)=0$ or $\ell_{\rho^*}(\ga)=1$. 

Part (b) follows from stationarity, see (\ref{eq:kktstationarity}).  The saddle-point property (\ref{eq:strongduality}) requires that for every $\mu\in\R$
  \begin{equation*}
    \begin{split}
      0 &\le L(\rho^*+\mu h,\lambda^*) - L(\rho^*,\lambda^*) \\
      &\leq \cE_{p,\si}(\rho^*+\mu h) + \sum_{\gamma\in\Gamma_{\lambda^*}}\lambda^*(\ga)(\ell_{\rho^*}(\gamma)- \ell_{\rho^*+\mu h}(\gamma))
       - \cE_{p,\si}(\rho^*)\\
       &= p\mu\sum_{e\in E}\si(e)\rho^*(e)^{p-1}h(e) - \mu\sum_{\ga\in\Ga_{\lambda^*}}\lambda^*(\ga)\ell_h(\gamma) + O(\mu^2).
    \end{split}
  \end{equation*}
Therefore, since $\mu$ is arbitrary,
  \begin{equation*}
    p\sum_{e\in E}\si(e)\rho^*(e)^{p-1}h(e) \geq\sum_{\ga\in\Ga_{\lambda^*}}\lambda^*(\ga)\ell_h(\gamma)\geq 0.
  \end{equation*}
\end{proof}

\begin{definition}\label{def:minimal}
  Let $\Ga$ be a family of objects on a weighted graph $G$ and $p\in
  (1,\infty)$. We say that a subfamily $\tilde{\Ga}\subset\Ga$ is a
  {\it minimal subfamily of $\Ga$ for $\Mod_{p,\si}(\Ga)$} if
  $\Mod_{p,\si}(\tilde{\Ga})=\Mod_{p,\si}(\Ga)$, and removing any
  $\ga$ from $\tilde{\Ga}$ results in
  $\Mod_{p,\si}(\tilde{\Ga})<\Mod_{p,\si}(\Ga)$.
\end{definition}
Theorem \ref{thm:minimal} below will show that minimal subfamilies are Lagrangian and hence Beurling, by Proposition \ref{prop:bcrit}. Moreover, minimal subfamilies are ``small'', in the sense that their cardinality is bounded above by $|E|$. In fact, in specific situations, minimal subfamilies might even be much smaller than $|E|$, as Example \ref{example} shows. 

Assume that $\Ga$ is a finite family of objects. By definition, the matrix $\cN=\left[\cN(\ga,e)\right]_{\ga\in\Ga,e\in E}$ has rank at most $\min\{|\Ga|,|E|\}$. 
It will be important in designing algorithms for computing modulus to understand when $\cN$ has full rank.
\begin{theorem}\label{thm:minimal}
Let $p\in (1,\infty)$. Let $\Ga$ be a finite family of objects on a graph $G$. 
Then 
\bi
\item[(i)] A minimal  subfamily $\tilde{\Ga}\subset\Ga$ always exists.
\item[(ii)] If $\tilde{\Ga}$ is a minimal  subfamily for $\Mod_{p,\si}(\Ga)$, then there exists a unique  optimal $\lambda^*$ so that $\tilde{\Ga}=\Ga_{\lambda^*}$. In particular, $\tilde{\Ga}$ is a Beurling subfamily. 
\item[(iii)] The objects in a minimal subfamily $\tilde{\Ga}$  are ``linearly independent'', in the sense that
\[\rank\cN(\tilde{\Ga})=|\tilde{\Ga}|.\]
\item[(iv)] In particular, this implies that the cardinality of minimal subfamilies is bounded above by $|E|$, where $|E|$ is the number of edges.
\item[(v)] Moreover,  if $\Ga$ is itself minimal, then the optimizer for the dual problem is unique.
\ei
\end{theorem}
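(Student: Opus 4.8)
The plan is to prove the five parts in an order that lets part~(iii) carry the real weight, with the rest following from it and from the duality machinery already set up. For (i) I would argue by \emph{greedy removal}: starting from $\Ga$ itself, which trivially attains the full modulus, I repeatedly delete an object whenever its deletion leaves $\Mod_{p,\si}$ unchanged. By monotonicity (Remark~\ref{rem:remarks}(b)) a deletion can never raise the modulus, so only equality needs to be checked. Since $\Ga$ is finite the procedure halts, and at the halting point no single deletion preserves the modulus, which is exactly the definition of a minimal subfamily (and since each singleton already has positive modulus, the process never exhausts the family).

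For the existence half of (ii), fix a minimal $\tilde\Ga$. As $\rho^*\in\Adm(\tilde\Ga)$ realizes $\Mod_{p,\si}(\tilde\Ga)=\Mod_{p,\si}(\Ga)$, it is also the extremal density for $\tilde\Ga$; applying Slater's condition to $\tilde\Ga$ produces a dual optimizer $\tilde\la^*\ge 0$. If some coordinate $\tilde\la^*(\ga_0)$ were zero, then $\{\ga:\tilde\la^*(\ga)>0\}\subseteq\tilde\Ga\setminus\{\ga_0\}$ would be Beurling by Proposition~\ref{prop:bcrit}, giving $\Mod_{p,\si}(\tilde\Ga\setminus\{\ga_0\})=\Mod_{p,\si}(\Ga)$ and contradicting minimality; hence $\tilde\la^*>0$ throughout $\tilde\Ga$. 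Extending $\tilde\la^*$ by zero to a vector $\la^*$ on $\Ga$ changes neither $\sum_\ga\la(\ga)$ nor $\cN^T\la$, the only ways $\la$ enters the objective of (\ref{eq:dualproblem}); so $\la^*$ is dual-optimal for $\Ga$ with $\Ga_{\la^*}=\tilde\Ga$, and it is Beurling by Proposition~\ref{prop:bcrit}.

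Part (iii) is the crux and the step I expect to be the main obstacle. The decisive observation is that stationarity (\ref{eq:optimal-rho-lambda}) forces $(\cN^T\la^*)(e)=p\si(e)\rho^*(e)^{p-1}$ for \emph{every} dual optimizer $\la^*$; since $\rho^*$ is unique, the vector $w:=\cN^T\la^*$ is common to all of them. Suppose now the rows $\{\cN(\ga):\ga\in\tilde\Ga\}$ were dependent, say $\sum_{\ga\in\tilde\Ga}c_\ga\cN(\ga)=0$ with $c\ne0$; pairing with $\rho^*$ and using $\ell_{\rho^*}(\ga)=1$ on $\tilde\Ga$ yields $\sum_\ga c_\ga=0$. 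Then $\la^*+tc$ satisfies $\cN^T(\la^*+tc)=w$ and keeps the same (optimal) objective for all $t$, so it remains dual-optimal as long as it stays nonnegative. Since $\sum_\ga c_\ga=0$ with $c\ne0$ forces a negative coordinate, I slide $t$ up to the first value $t^*$ at which some coordinate $\ga_0$ reaches zero; the resulting optimizer has $\Ga_{\la^*+t^*c}\subseteq\tilde\Ga\setminus\{\ga_0\}$, which is Beurling, so $\Mod_{p,\si}(\tilde\Ga\setminus\{\ga_0\})=\Mod_{p,\si}(\Ga)$, again contradicting minimality. Thus $\rank\cN(\tilde\Ga)=|\tilde\Ga|$.

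Everything else is a corollary of (iii). The uniqueness in (ii) follows because independence of the rows makes $\cN(\tilde\Ga)^T$ injective, so the equation $\cN(\tilde\Ga)^T\la^*=w$ determines $\la^*$ on $\tilde\Ga$ uniquely. Part (iv) is immediate from $|\tilde\Ga|=\rank\cN(\tilde\Ga)\le|E|$. Finally, for (v), applying (iii) with $\tilde\Ga=\Ga$ shows all rows of $\cN$ are independent, so $\cN^T$ is injective on $\R^{\Ga}$; as every dual optimizer solves $\cN^T\la^*=w$, there is exactly one, which is the asserted uniqueness of the dual solution.
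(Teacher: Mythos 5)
Your proof is correct, and large parts of it coincide with the paper's: (i) by greedy removal using monotonicity (Remark~\ref{rem:remarks}~(b)), the existence half of (ii) by solving the problem for $\tilde{\Ga}$, noting that $\rho^*$ is also extremal for $\tilde{\Ga}$, and extending the dual optimizer by zero, and (iv), (v) as corollaries of (iii). The genuine difference is where the perturbation (``sliding'') argument is deployed. The paper uses the affine-combination trick only for the uniqueness claim in (ii): given two optimizers positive on $\tilde{\Ga}$, which satisfy $\cN^T\la_1^*=\cN^T\la_2^*$ by \eqref{eq:optimal-rho-lambda} and uniqueness of $\rho^*$, it slides along the line joining them until a coordinate vanishes, contradicting minimality. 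For (iii) the paper argues separately: it writes $\cN(\ga_r)=\sum_{k<r}c_k\cN(\ga_k)$, uses Proposition~\ref{prop:bcrit} to deduce $\sum_{k<r}c_k=1$, and concludes that the $r$th constraint is ``redundant.'' You instead run the sliding argument directly on a null vector $c$ of $\cN(\tilde{\Ga})^T$: pairing with $\rho^*$ and using tightness of the constraints gives $\one^Tc=0$, so $\la^*+tc$ keeps both $\cN^T\la$ and $\sum_\ga\la(\ga)$ fixed and hence stays dual optimal while feasible; sliding to the first vanishing coordinate yields an optimizer whose Lagrangian (hence Beurling) subfamily omits some $\ga_0$, contradicting minimality via \eqref{eq:beurlingmod} and monotonicity. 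Uniqueness in (ii) and (v) then both drop out of the injectivity of $\cN(\tilde{\Ga})^T$, with no circularity since your order is: existence in (ii), then (iii), then uniqueness. Your route buys rigor at exactly the point where the paper is terse: when some $c_k<0$, the fact that the equality $\ell_\rho(\ga_r)=\sum_k c_k\ell_\rho(\ga_k)$ holds with $\sum_k c_k=1$ does not by itself show that deleting the inequality constraint $\ell_\rho(\ga_r)\ge 1$ leaves the modulus unchanged---the admissible set genuinely grows---and your dual perturbation is the natural completion of that step, handling arbitrary signs. What the paper's organization buys in exchange is a uniqueness proof in (ii) that is self-contained and does not wait on (iii); nothing substantive is lost either way.
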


\begin{proof}[Proof of Theorem \ref{thm:minimal}]
To see (i) note that, by monotonicity (Remark \ref{rem:remarks} (b)), removing objects from $\Ga$, one at a time, leads to a minimal subfamily.

For (ii), first compute $\Mod_{p,\si}(\tilde{\Ga})$. This leads to an extremal density $\rho^*(\tilde{\Ga})$ and at least one set of optimal dual variables $\la^*(\tilde{\Ga})$. Then, since $\Mod_{p,\si}(\tilde{\Ga})=\Mod_{p,\si}(\Ga)$,  $\la^*=\la^*(\tilde{\Ga})$ gives a solution to the dual problem \ref{eq:dualproblem} for $\Ga$. In particular, through (\ref{eq:optimal-rho-lambda}), we see that $\rho^*(\tilde{\Ga})$ is equal to $\rho^*$. Hence, we found a set of optimal dual variables $\la^*$ that is supported on $\tilde{\Ga}$. 
Moreover, if $\la^*(\ga)=0$ for some $\ga\in\tilde{\Ga}$, then $\ga$ can be removed from $\tilde{\Ga}$ without affecting the modulus. This contradicts the minimality of $\tilde{\Ga}$. So $\la^*(\ga)>0$ for every
$\ga\in\tilde{\Ga}$. Now assume that there are two distinct set of optimal dual variables $\la_1^*$ and $\la_2^*$, both supported and strictly positive on $\tilde{\Ga}$. By (\ref{eq:optimal-rho-lambda}) and uniqueness of $\rho^*$, we must have $\cN^T \la_1^*= \cN^T \la_2^*$.
Define
\[
\la_t^*:=t\la_1^*+(1-t)\la_2^*
\]
Then by linearity, $\cN^T \la_t^*=\cN^T \la_1^*= \cN^T \la_2^*$ for all $t\in \R$. So $\la_t^*$ is a solution to the dual problem as long as it is feasible. However, since $\la_1^*\neq\la_2^*$ we are guaranteed that $\la_t^*$ becomes zero in some coordinate for an appropriate choice of $t$. In particular, we can find a feasible $\la_t^*$ that is supported on $\tilde{\Ga}$ but is zero on some $\ga\in\tilde{\Ga}$. But by minimality, $\la^*$ is supported on $\tilde{\Ga}$ it must be strictly positive. This is a contradiction, so we have uniqueness.

To prove (iii),  notice that, by (ii), $\tilde{\Gamma}$  is a Lagrangian subfamily for some
  choice of optimal $\lambda^*$.  
 Let $\tilde{\Gamma}=\{\gamma_1,\gamma_2,\ldots,\gamma_r\}$ be an enumeration of
  $\tilde{\Gamma}$ and suppose $r>|E|$.  Then $\rank\cN(\tilde{\Ga})\le |E|<r$, so (after
  possibly reordering $\tilde{\Gamma}$) there exist real numbers
  $\{c_k\}_{k=1}^{r-1}$ such that
  \begin{equation}\label{eq:lin-dep}
    \cN(\gamma_r,e) = \sum_{k=1}^{r-1}c_k\cN(\gamma_k,e)\qquad\forall e\in E.
  \end{equation}
  We wish to show that the first $r-1$ constraints,
  $\ell_{\rho}(\gamma_k)=1$ for $k=1,2,\ldots,r-1$, imply the last
  constraint, $\ell_{\rho}(\gamma_r)$, and, thus, that $\tilde{\Gamma}$ is
  not minimal for $\Mod_{p,\si}(\Ga)$.

  For any $\rho$, Equation~\eqref{eq:lin-dep} implies that
  \begin{equation}\label{eq:rth-constraint}
    \begin{split}
      \ell_{\rho}(\gamma_r) &= \sum_{e\in E}\cN(\gamma_r,e)\rho(e) =
      \sum_{e\in E}\sum_{k=1}^{r-1}c_k\cN(\gamma_k,e)\rho(e) \\ &=
      \sum_{k=1}^{r-1}c_k\sum_{e\in E}\cN(\gamma_k,e)\rho(e) =
      \sum_{k=1}^{r-1}c_k\ell_{\rho}(\gamma_k).
    \end{split}
  \end{equation}
 By Beurling's Criterion (Proposition \ref{prop:bcrit}), for the extremal density $\rho^*$, we have
  $\ell_{\rho^*}(\gamma_k)=1$ for all $k=1,2,\ldots,r$, which, in
  light of Equation~\eqref{eq:rth-constraint}, implies that
  $\sum_{k=1}^{r-1}c_k=1$.  But this shows that the $r$th constraint
  is redundant, and $\gamma_r$ is not needed in $\tilde{\Gamma}$.

Statement (iv) follows from (iii), simply because $\rank\cN(\tilde{\Ga})\leq\min\{|E|,|\tilde{\Ga}|\}$.

The uniqueness assertion (v) also follows from (iii), because by (\ref{eq:optimal-rho-lambda}) any two optimal $\la^*$ can only differ by an element of the kernel of $\cN(\tilde{\Ga})^T$. But (iii) states that the kernel of $\cN(\tilde{\Ga})^T$ is trivial.
\end{proof}

\section{Minimal families in the  $p=2$ case}\label{sec:caseptwo}

For simplicity, assume that $G$ is unweighted. In the $p=2$ case, problem (\ref{eq:cvx-opt-orig}) becomes the following quadratic programming:
\begin{equation}
  \label{eq:quad-prog}
  \begin{split}
    \text{\bf minimize}\qquad &\rho^T\rho\\
    \text{\bf subject to}\qquad & \cN \rho\geq 1
  \end{split}
\end{equation}
And the dual can be written as:

\begin{equation}\label{eq:quad-dual}
  \begin{split}
    \text{\bf maximize} &\qquad \lambda^T \one-\frac{1}{4}\lambda^T \cN\cN^T \lambda\\
    \text{\bf subject to} &\qquad \lambda\ge 0
  \end{split}
\end{equation}

Assume that $\Ga$ is itself a minimal  family, so that, by Theorem \ref{thm:minimal}, we have $\rank\cN=|\Ga|$ and the dual optimizer is unique. 
\begin{definition}
\label{def:overlap-mtx}
Let the {\sl overlap matrix} for $\Ga$ be the matrix $C\defeq \cN\cN^T$. The entries of $C$ correspond to pairs of walks $(\ga_i,\ga_j)$ and
\[
C(\ga_i,\ga_j)=\sum_{e\in E}\cN(\ga_i,e)\cN(\ga_j,e).
\]
\end{definition}
Take for instance the family of walks in our Basic Example \ref{ex:basic}. There $\Ga$ is a family of $k$ disjoint simple paths of length $\ell$. In this case the overlap matrix is diagonal with $C(\ga_i,\ga_i)\equiv\ell$

Note that $\rank\cN=|\Ga|$ implies that $C=\cN\cN^T$ is invertible. So we get the following result.
\begin{proposition}
Suppose $\Ga$ is a minimal family. Then the dual problem  has a unique solution given by
\[
\la^*=2C^{-1}\mathbf{1}.
\] 
Moreover, the $2$-Modulus of $\Ga$ is
\[
\Mod_2(\Ga)=\mathbf{1}^T C^{-1} \mathbf{1}.
\]
\end{proposition}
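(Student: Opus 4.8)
The plan is to exploit the full-rank property of $\cN$ guaranteed by minimality and then to recognize the dual objective as a strictly concave quadratic whose maximizer, being interior, is computed in closed form by setting the gradient to zero.

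First I would record the consequences of minimality. Since $\Ga$ is minimal, Theorem~\ref{thm:minimal}(iii) gives $\rank\cN=|\Ga|$, so $\cN$ has full row rank; hence $C=\cN\cN^T$ is symmetric positive definite. Indeed, for any $v\neq 0$ we have $v^TCv=v^T\cN\cN^Tv=\|\cN^Tv\|^2>0$, because $\cN^T$ has trivial kernel (full column rank $|\Ga|$). In particular $C$ is invertible, and the dual objective $g(\la)=\la^T\one-\tfrac14\la^TC\la$ from~\eqref{eq:quad-dual} is strictly concave.

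Next I would locate the maximizer. By Theorem~\ref{thm:minimal}(v) the dual optimizer $\la^*$ is unique, and by Theorem~\ref{thm:minimal}(ii) we have $\Ga=\Ga_{\la^*}$, i.e.\ $\la^*(\ga)>0$ for every $\ga\in\Ga$. Thus $\la^*$ lies in the interior of the feasible region $\{\la\ge 0\}$, the inequality constraints are inactive, and optimality reduces to the unconstrained stationarity condition $\nabla g(\la^*)=\one-\tfrac12 C\la^*=0$. Solving gives $\la^*=2C^{-1}\one$; in particular this vector is automatically nonnegative, a fact obtained for free from the structural results rather than by inspecting $C^{-1}$. Finally, reading off the modulus via strong duality, $\Mod_2(\Ga)=g(\la^*)$, I would substitute $\la^*=2C^{-1}\one$ and use symmetry of $C^{-1}$ to get $(\la^*)^T\one=2\,\one^TC^{-1}\one$ and $(\la^*)^TC\la^*=4\,\one^TC^{-1}\one$, whence $g(\la^*)=2\,\one^TC^{-1}\one-\one^TC^{-1}\one=\one^TC^{-1}\one$.

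The only genuine subtlety — which I would flag as the crux rather than a real obstacle — is the passage from the constrained dual problem to the unconstrained stationarity equation. This step is not automatic for a general positive definite $C$, since $C^{-1}\one$ need not have nonnegative entries; it is precisely Theorem~\ref{thm:minimal}(ii), asserting strict positivity of $\la^*$ on a minimal family, that guarantees the constraints are inactive and legitimizes setting the gradient to zero. Everything else is linear algebra.
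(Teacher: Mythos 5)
Your proposal is correct and follows essentially the same route as the paper's proof: both use Theorem~\ref{thm:minimal} (parts (ii), (iii), (v)) to obtain strict positivity of the unique dual optimizer and invertibility of $C$, reduce to the unconstrained stationarity equation $\nabla g(\la^*)=\one-\tfrac12 C\la^*=0$, and read off $\Mod_2(\Ga)=g(\la^*)=\one^TC^{-1}\one$ via strong duality. Your additional remarks (positive definiteness of $C$, strict concavity of $g$, and the explicit flag that positivity of $\la^*$ is what legitimizes ignoring the constraint $\la\ge 0$) only make explicit what the paper treats more tersely.
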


\begin{proof}
Denote it by $\la^*$ the unique dual optimizer. Then, by Theorem \ref{thm:minimal} (ii), $\la^*(\ga)>0$ for all $\ga\in\Ga$. Therefore, in order to maximize $g(\la)$ in (\ref{eq:dualproblem}) it's enough to compute the gradient and set it equal to zero (as the constraint $\la\geq 0$ will be satisfied in the end). Recall that
\begin{align*}
g(\la)= \mathbf{1}^T \la-\frac{1}{4}\la^T C \la.
\end{align*}
So we can compute the gradient of  $g$ to be:
\[
\nabla g=\mathbf{1}-\frac{1}{2}C\la.
\]
Since $\Ga$ is minimal, $C$ is invertible and $\nabla g=0$ yields $\la^*=2C^{-1}\mathbf{1}$. 

Moreover,
\[
\Mod_2(\Ga)=g(\la^*)=\mathbf{1}^T C^{-1} \mathbf{1}.
\]
\end{proof}

\begin{example}[A simple example for $p=2$]
\label{ex:simple}
Let $G$ be the unweighted graph depicted in Figure \ref{fig:simpleex}.
The family $\Ga$ consists of two walks $\ga_1=a\ c\ b$ and $\ga_2=a\ c\ d\ b$.
\begin{figure}[h]
\includegraphics[scale=0.5]{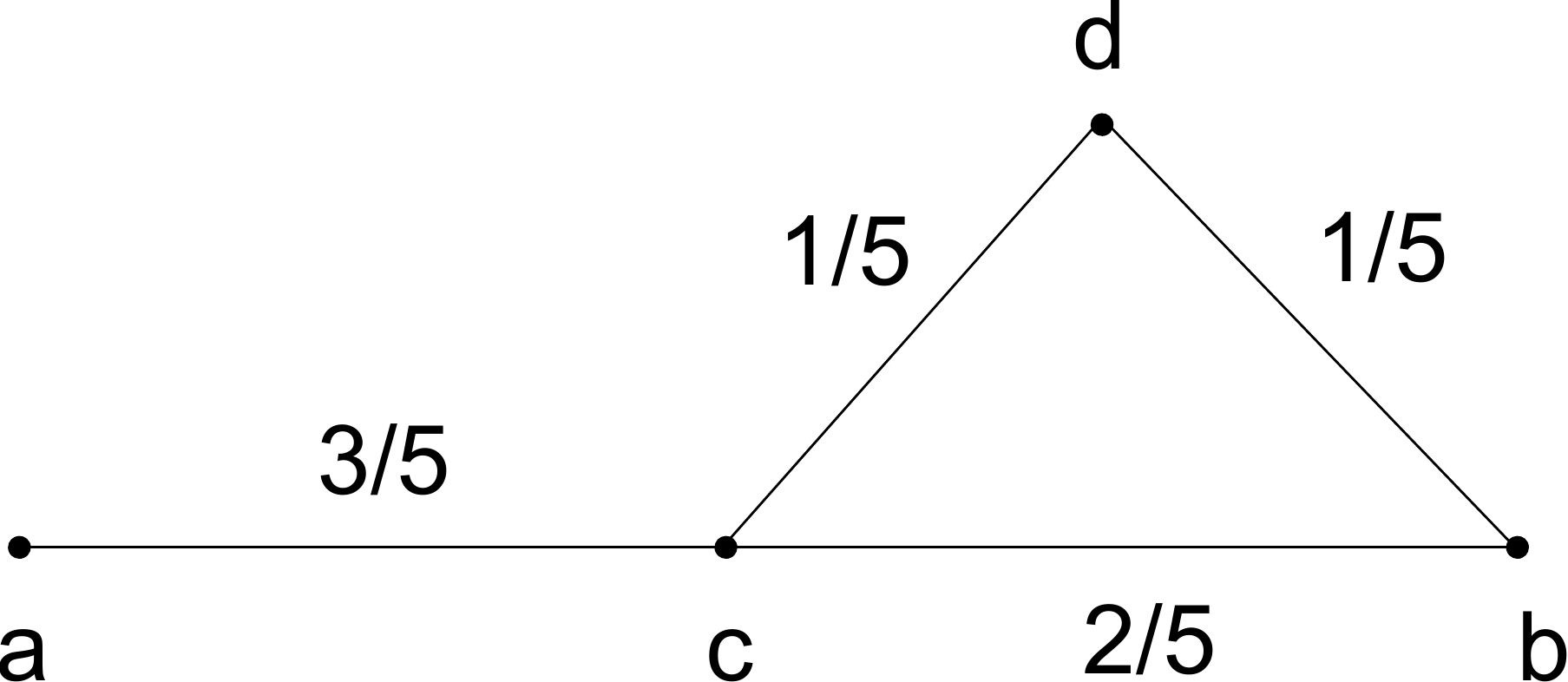}
\caption{A simple example}\label{fig:simpleex}
\end{figure}

 The extremal density $\rho^*$  is given on the edges as in Figure \ref{fig:simpleex}. One can check extremality using Beurling's Criterion (Theorem \ref{thm:beurling}), since $\ell_\rho(\ga_i)=1$ for $i=1,2$. And, whenever $\ell_h(\ga_i)\geq 0$ for $i=1,2$, then
\[
\frac{3}{5}h(ac)+\frac{2}{5}h(cb)+\frac{1}{5}h(cd)+\frac{1}{5}h(db)=\frac{2}{5}\left[h(ac)+h(cb)\right]+\frac{1}{5}\left[h(ac)+h(cd)+h(db)\right]\geq 0
\]

 Observe that if we assign importance $2/5$ to $\ga_1$ and $1/5$ to $\ga_2$, then we would recover $\rho(e)$ by adding the importance of every walk that traverses edge $e$. That is exactly the value of the Lagrange variables in this case. To see this note that in this case 
\[
C=
\left[
\begin{array}{ll}
2 & 1\\
1 & 3
\end{array}\right]
\qquad\mbox{and}
\qquad
C^{-1}=\frac{1}{5}
\left[
\begin{array}{rr}
3 & -1\\
-1 & 2
\end{array}\right].
\]
\[
\qquad
\mbox{So}
\qquad
\frac{\la^*}{2}=C^{-1}\mathbf{1}=\left[\begin{array}{r} 2/5\\1/5  \end{array}\right]
\]
and by (\ref{eq:optimal-rho-lambda}), $\rho^*=\cN^T\la^*/2$.
\end{example}

\section{A probabilistic interpretation}
\label{sec:prob-interp}

The dual in~\eqref{eq:quad-dual} can be reinterpreted in a
probabilistic setting by splitting the nonnegative Lagrange dual
variable $\lambda$ into the product of a probability mass function
(pmf) on $\Gamma$, 
$$\mu\in\cP(\Gamma):=\left\{\mu\in\mathbb{R}_{\ge 0}^\Gamma : \sum_{\gamma\in\Gamma}\mu(\gamma)=1 \right\},$$ 
and a nonnegative scalar
$\nu$.

\begin{theorem}\label{thm:prob}
  Let $\Gamma$ be a finite family of objects on a graph $G$.  Then
  \begin{equation}\label{eq:prob-dual}
    \Mod_2(\Gamma) = \left(\min_{\mu\in\cP(\Gamma)}\mu^T \cN\cN^T \mu\right)^{-1}.
  \end{equation}

Moreover, any optimal pmf $\mu^*$ is related to the extremal density $\rho^*$ for $\Mod_2(\Ga)$ as follows:
\begin{equation}\label{eq:rhomu}
  \frac{\rho^*(e)}{\Mod_2(\Gamma)}=\left(\cN^T\mu^*\right)(e)\qquad\forall e\in E.
\end{equation}

\end{theorem}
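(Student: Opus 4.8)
The plan is to exploit the homogeneity structure of the quadratic dual \eqref{eq:quad-dual} by factoring the nonnegative dual variable as $\lambda = \nu\mu$, where $\nu = \sum_{\gamma\in\Gamma}\lambda(\gamma)\ge 0$ is its total mass and $\mu = \lambda/\nu\in\cP(\Gamma)$ is the resulting pmf (with $\lambda=0$ corresponding to $\nu=0$ and arbitrary $\mu$). This map is onto $\R_{\ge 0}^{\Gamma}$, so maximizing the dual objective over $\lambda\ge 0$ is the same as maximizing over all pairs $(\nu,\mu)\in[0,\infty)\times\cP(\Gamma)$. Writing $C=\cN\cN^{T}$ and using $\mu^{T}\mathbf{1}=1$, the dual objective becomes $g(\nu\mu)=\nu-\tfrac14\nu^{2}\,\mu^{T}C\mu$, and I would split the maximization as an inner maximization over $\nu$ followed by an outer one over $\mu$.

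First I would record the elementary but essential fact that $\mu^{T}C\mu=\|\cN^{T}\mu\|^{2}>0$ for every $\mu\in\cP(\Gamma)$. Indeed, $\mu^{T}C\mu=0$ would force $(\cN^{T}\mu)(e)=\sum_{\gamma}\mu(\gamma)\cN(\gamma,e)=0$ for all $e$; since $\mu\ge 0$ and $\cN\ge 0$, every $\gamma$ with $\mu(\gamma)>0$ would then have zero usage on every edge, contradicting the standing assumption in Definition~\ref{def:matrixn} that each object uses at least one edge. This strict positivity is the one point that needs care: it is what makes the inner optimization well posed and the final reciprocal meaningful.

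With that in hand the computation separates cleanly. For fixed $\mu$, maximizing the concave quadratic $\nu\mapsto\nu-\tfrac14\nu^{2}\mu^{T}C\mu$ over $\nu\ge 0$ gives the interior maximizer $\nu^{*}(\mu)=2/(\mu^{T}C\mu)$ with optimal value $(\mu^{T}C\mu)^{-1}$. Maximizing this over $\mu\in\cP(\Gamma)$ yields $\sup_{\lambda\ge 0}g(\lambda)=(\min_{\mu\in\cP(\Gamma)}\mu^{T}C\mu)^{-1}$, where the minimum is attained by compactness of $\cP(\Gamma)$ and continuity. Since Slater's condition supplies strong duality, the left-hand side equals $\Mod_2(\Gamma)$, which is exactly \eqref{eq:prob-dual}.

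Finally, for \eqref{eq:rhomu} I would track an optimal pmf $\mu^{*}$ back through the correspondence. By the previous paragraph $\mu^{*T}C\mu^{*}=\Mod_2(\Gamma)^{-1}$, so the associated dual variable is $\lambda^{*}=\nu^{*}(\mu^{*})\,\mu^{*}=2\,\Mod_2(\Gamma)\,\mu^{*}$, which is feasible and optimal for \eqref{eq:quad-dual}. Substituting $p=2$ (and $\sigma\equiv 1$) into the stationarity formula \eqref{eq:optimal-rho-lambda} gives $\rho^{*}(e)=\tfrac12(\cN^{T}\lambda^{*})(e)=\Mod_2(\Gamma)(\cN^{T}\mu^{*})(e)$, and dividing by $\Mod_2(\Gamma)$ yields \eqref{eq:rhomu}. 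Because the extremal density $\rho^{*}$ is unique even when the dual optimizer is not, the identity holds for every optimal $\mu^{*}$, and the only genuine subtlety throughout remains the strict positivity step together with the invocation of strong duality.
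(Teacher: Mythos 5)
Your proof is correct and takes essentially the same approach as the paper: both factor the dual variable as $\lambda=\nu\mu$ with $\nu\ge 0$ and $\mu\in\cP(\Gamma)$, reduce the dual \eqref{eq:quad-dual} to maximizing $\nu-\tfrac{\nu^2}{4}\mu^T\cN\cN^T\mu$, obtain the optimal value $\left(\min_{\mu}\mu^T\cN\cN^T\mu\right)^{-1}$ via $\nu^*=2/\alpha$, invoke strong duality for \eqref{eq:prob-dual}, and then derive \eqref{eq:rhomu} by substituting the resulting optimal $\lambda^*$ into the stationarity formula \eqref{eq:optimal-rho-lambda} with $p=2$ and $\sigma\equiv 1$. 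Your explicit verification that $\mu^T\cN\cN^T\mu>0$ (via the standing assumption in Definition~\ref{def:matrixn}) is a detail the paper leaves implicit, but it is a refinement of the same argument, not a different one.
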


\begin{proof}
  The optimization problem in~\eqref{eq:quad-dual} can be written as
\begin{equation*}
  \max_{\mu\in\cP(\Gamma),\nu\ge 0}\left\{
    \nu - \frac{\nu^2}{4}\mu^T \cN\cN^T \mu
  \right\}
  = \max_{\nu\ge 0}\left\{
    \nu - \frac{\nu^2}{4}\min_{\mu\in\cP(\Gamma)}\mu^T \cN\cN^T \mu
  \right\}.
\end{equation*}
By convexity, there exists an optimal choice $\mu=\mu^*$ (not
necessarily unique) for the minimization problem.  Let $\alpha :=
(\mu^*)^T\cN\cN^T\mu^*$ be its energy.  The maximum on the right is
attained when $\nu=\nu^*:=2/\alpha$.  Strong duality then implies that
\begin{equation*}
  \Mod_2(\Gamma) =   \max_{\mu\in\cP(\Gamma),\nu\ge 0}\left\{
    \nu - \frac{\nu^2}{4}\mu^T \cN\cN^T \mu
  \right\}
  = \nu^* - \frac{(\nu^*)^2}{4}(\mu^*)^T\cN\cN^T\mu^*
  = 2/\alpha - 1/\alpha = 1/\alpha,
\end{equation*}
thus showing~\eqref{eq:prob-dual}.
  
Moreover, \eqref{eq:optimal-rho-lambda} establishes (\ref{eq:rhomu}), since
\[
\rho^*(e)= \frac{1}{2}\left(\cN^T\lambda^*\right)(e)
  = \frac{1}{2}\nu^*\left(\cN^T\mu^*\right)(e) = \frac{1}{\alpha}\left(\cN^T\mu^*\right)(e) = \Mod_2(\Ga)\left(\cN^T\mu^*\right)(e).
\]

\end{proof}

In order to obtain a probabilistic interpretation, consider
$\mu\in\cP(\Gamma)$, a pmf on $\Gamma$, which defines a random
variable $\underline{\gamma}$ such that
$\mu(\gamma)=\Pr_{\mu}(\underline{\gamma}=\gamma)$ defines the
probability that $\underline{\gamma}$ takes the value
$\gamma\in\Gamma$.  The values of $\cN^T\mu$ give the expected usage
of each edge by $\underline{\gamma}$.  That is,
\begin{equation}\label{eq:expected-usage}
  \left(\cN^T\mu\right)(e) = \sum_{\ga\in\Ga}\cN(\ga,e)\mu(\ga)=\bE_{\mu}\left[\cN(\underline{\gamma},e)\right].
\end{equation}
Thus, $\rho^*(e)$ is proportional to the expected usage of edge $e$
with respect to a pmf $\mu^*$ which minimizes the energy
$\mu^T\cN\cN^T\mu$.  This energy can be understood by considering two
i.i.d.~random variables $\underline{\gamma}$ and
$\underline{\gamma'}$ with a common pmf $\mu$.  Recalling the overlap matrix
$C$ from Definition~\ref{def:overlap-mtx}, we have
\begin{equation*}
  \begin{split}
    \mu^T\cN\cN^T\mu &=
    \sum_{\gamma,\gamma'\in\Gamma}C(\gamma,\gamma')
    \mu(\gamma)\mu(\gamma') \\
    &= \sum_{\gamma,\gamma'\in\Gamma}C(\gamma,\gamma')
    \Pr_{\mu}(\underline{\gamma}=\gamma, \underline{\gamma'}=\gamma')
    = \bE_{\mu}\left[C(\underline{\gamma},\underline{\gamma'})\right].
  \end{split}
\end{equation*}
In other words, the optimal mass functions $\mu^*$ minimize the
expected overlap of two independent randomly chosen objects in $\Ga$, while the extremal density $\rho^*$ is
proportional to the expected usage of the edges with respect to an optimal probability mass
function.

\begin{example}[Connecting family]
  Considering again Example~\ref{ex:simple}, we see that
  \begin{equation*}
    \frac{\lambda^*}{2} =
    \begin{bmatrix}
      2/5 \\ 1/5
    \end{bmatrix}
    = \frac{3}{5}
    \begin{bmatrix}
      2/3 \\ 1/3
    \end{bmatrix}
    = \frac{3}{5}\mu^*,
  \end{equation*}
  and the modulus can be computed as
  \begin{equation*}
    \sum_{e\in E}\rho^*(e)^2 = \frac{9+1+1+4}{25} = \frac{15}{25} = \frac{3}{5}.
  \end{equation*}

  As Figure \ref{fig:simpleex} indicates, on the edge $ac$, $\rho^*/\Mod_2(\Gamma)=1$, yielding an expected
  usage of $1$, and, indeed, $\cN(\gamma_1,ac)=\cN(\gamma_2,ac)=1$.
  Similarly, edges $cd$ and $db$ have $\rho^*/\Mod_2(\Gamma)$ with a
  value of $1/3$ while the value for $cb$ is $2/3$.  This is
  consistent with $\mu^*$, which gives
  $\Pr_{\mu^*}(\underline{\gamma}=\ga_1)=2/3$ and
  $\Pr_{\mu^*}(\underline{\gamma}=\ga_2)=1/3$.  Finally, since each
  object in $\Gamma$ has a usage of $0$ or $1$ for each edge, the
  expected overlap can be computed as
  \begin{equation*}
    \begin{split}
      \bE_{\mu^*}\left[C(\underline{\gamma},\underline{\gamma'})\right] &=
      2\Pr_{\mu^*}(\underline{\gamma}=\ga_1,\underline{\gamma'}=\ga_1)
      +
      3\Pr_{\mu^*}(\underline{\gamma}=\ga_2,\underline{\gamma'}=\ga_2)\\
      &\quad +
      \Pr_{\mu^*}(\underline{\gamma}=\ga_1,\underline{\gamma'}=\ga_2)
      +
      \Pr_{\mu^*}(\underline{\gamma}=\ga_2,\underline{\gamma'}=\ga_1)\\
      &= 2\cdot\frac{4}{9} + 3\frac{1}{9} + \frac{2}{9} + \frac{2}{9}
      = \frac{5}{3} = \Mod_2(\Gamma)^{-1}.
    \end{split}
  \end{equation*}
\end{example}

An immediate corollary of Theorem~\ref{thm:prob} is the following,
which can be useful for establishing the modulus.
\begin{corollary}\label{cor:prob-bounds}
  Let $\Gamma$ be a finite family of objects on a graph $G$, let
  $\rho\in\Adm(\Gamma)$ and let $\mu\in\cP(\Gamma)$, then
  \begin{equation*}
    \rho^T\rho \ge \Mod_2(\Gamma) \ge \left(\mu^T\cN\cN^T\mu\right)^{-1}.
  \end{equation*}
\end{corollary}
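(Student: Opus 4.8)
The plan is to recognize that each of the two inequalities is simply a one-sided instance of one of the two optimization characterizations of $\Mod_2(\Gamma)$, so that essentially no new work is required beyond Theorem~\ref{thm:prob} together with the primal definition of modulus. The whole corollary is really a statement that a feasible competitor in a minimization problem gives a bound on the optimal value.

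First I would handle the left inequality $\rho^T\rho\ge\Mod_2(\Gamma)$. In the unweighted setting of this section the primal problem~\eqref{eq:quad-prog} reads $\Mod_2(\Gamma)=\inf\{(\rho')^T\rho':\rho'\in\Adm(\Gamma)\}$. Since the given $\rho$ lies in $\Adm(\Gamma)$, it is a feasible point of this minimization, and therefore its energy $\rho^T\rho$ cannot be smaller than the infimum. This gives the bound at once, with no computation.

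Next I would handle the right inequality $\Mod_2(\Gamma)\ge(\mu^T\cN\cN^T\mu)^{-1}$ by invoking the dual characterization established in Theorem~\ref{thm:prob}, namely $\Mod_2(\Gamma)^{-1}=\min_{\mu'\in\cP(\Gamma)}(\mu')^T\cN\cN^T\mu'$. Because the given $\mu$ is an admissible competitor in this minimization over $\cP(\Gamma)$, we have $\mu^T\cN\cN^T\mu\ge\Mod_2(\Gamma)^{-1}$; passing to reciprocals reverses the inequality and produces the claimed bound.

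The only point requiring a word of care — and it is not a genuine obstacle — is justifying the reciprocal step, i.e.\ that $x\mapsto x^{-1}$ is order-reversing here because both quantities are strictly positive. Positivity of $\Mod_2(\Gamma)$ follows from Definition~\ref{def:matrixn}: each object has positive usage on some edge, so every admissible density is nonzero and the primal energy is bounded below by a positive number. Positivity of the quadratic form $\mu^T\cN\cN^T\mu=\|\cN^T\mu\|^2$ follows because $\cN$ has no zero row, hence $\cN^T\mu$ cannot vanish for a probability vector $\mu$. With these observations the two bounds combine to give the asserted chain of inequalities.
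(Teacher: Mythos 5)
Your proof is correct and follows exactly the route the paper intends: the paper states this result as an immediate consequence of Theorem~\ref{thm:prob}, with the left inequality coming from $\rho$ being feasible for the primal problem~\eqref{eq:quad-prog} and the right from $\mu$ being feasible in the minimization of~\eqref{eq:prob-dual}. Your extra care about positivity (needed to invert the inequality) is a sound, if terse, completion of a step the paper leaves implicit.
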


\begin{example}[Spanning Trees]
  Let $G$ be the graph of Figure~\ref{fig:simpleex} and let $\Gamma$
  be the set of spanning trees, with usage function $\cN(\gamma,e)$
  equaling $1$ if $e$ is included in the tree $\gamma$ and $0$ otherwise.
  There are three spanning trees for this graph, and the overlap matrix
  (regardless the enumeration of $\Gamma$) is
  \begin{equation*}
    \cN\cN^T = C =
    \begin{bmatrix}
      3 & 2 & 2 \\ 2 & 3 & 2 \\ 2 & 2 & 3
    \end{bmatrix}.
  \end{equation*}
  Let $\rho$ be the function taking the value $3/7$ on edge $ac$ and
  $2/7$ elsewhere.  This $\rho$ is admissible, since every spanning
  tree must include $ac$ and two additional edges.  Let $\mu$ be the
  uniform pmf on $\Gamma$, then the expected overlap is
  \begin{equation*}
    \left(\frac{1}{3}\one\right)^TC\left(\frac{1}{3}\one\right)
    = \frac{1}{9}\one^TC\one = \frac{21}{9} = \frac{7}{3}.
  \end{equation*}
  By Corollary~\ref{cor:prob-bounds},
  \begin{equation*}
    \frac{3}{7} = \rho^T\rho \ge \Mod_2(\Gamma) 
    \ge \left(\mu^TC\mu\right)^{-1} = \frac{3}{7},
  \end{equation*}
  so $\Mod_2(\Gamma)=\frac{3}{7}$ and, in fact, $\rho$ and $\mu$ are
  optimal for their respective minimization problems.
\end{example}

\begin{remark}
The general dual problem~\eqref{eq:dualproblem} can also be given a probabilistic
interpretation for $1<p<\infty$ and $\sigma$ a set of arbitrary
positive edge weights.  For any $\mu\in\cP(\Gamma)$, let
$\bE_\mu\left[\cN(\underline{\gamma},e)\right]$ be the expected usage
of the edge $e$ with $\underline{\gamma}$ a random variable with pmf
$\mu$.  Then~\eqref{eq:dualproblem} can be reformulated as
\begin{equation*}
  \Mod_p(\Gamma)^{-\frac{1}{p}} = 
  \left(
    \min_{\mu\in\cP(\Gamma)}\sum_{e\in E}\sigma(e)^{-\frac{q}{p}}
    \bE_\mu\left[\cN(\underline{\gamma},e)\right]^q
  \right)^{\frac{1}{q}},
\end{equation*}
with $q=p/(p-1)$ the conjugate H\"{o}lder exponent of $p$.  Thus, the
modulus problem can be reinterpreted as a problem of minimizing a weighted $q$-norm of the expected usage. The unweighted case with $p=2$ is special
in the sense that the sum of the squares of the expected usages can be
reinterpreted as the expected overlap.
\end{remark}

In the case of spanning tree modulus, i.e., when $\Ga_{\rm spt}=\cT$ the family of all spanning trees,  a lower bound can always be obtained from Corollary \ref{cor:prob-bounds} by choosing the uniform distribution $\mu_0$ on $\cP(\cT)$.
The theory of uniform spanning tree is well-developed. For instance the following is a known fact due to Kirchhoff, see \cite[Section 4.2]{lyons-peres}.
\begin{theorem}[Kirchhoff]\label{thm:usteffres}
Let $\mu_0$ be the uniform distribution on $\cT$. Then, given $e=\{x,y\}\in E$,
\[
\bP_{\mu_0}\left[e\in T\right]=\cReff(e)=\Mod_2(x,y)^{-1}
\]
\end{theorem}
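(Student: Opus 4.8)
The statement packages two separate facts, and the plan is to dispatch them in turn. The second equality, $\cReff(e)=\Mod_2(x,y)^{-1}$, is immediate from Theorem~\ref{thm:generalize}: for an undirected graph the connecting modulus satisfies $\Mod_2(x,y)=\cCeff(x,y)$, and effective resistance is by definition the reciprocal of effective conductance, $\cReff(x,y)=\cCeff(x,y)^{-1}$. Here $\cReff(e)$ denotes the effective resistance between the endpoints $x,y$ of $e$, computed in the full graph $G$ with the edge $e$ present. So the entire content is the first equality, $\bP_{\mu_0}[e\in T]=\cReff(e)$, which is Kirchhoff's classical theorem; I would prove it by a spanning-tree count.

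First I would reduce the probability to a ratio of tree counts. Write $\tau(G)$ for the number of spanning trees, so that $\bP_{\mu_0}[e\in T]=\tau_e(G)/\tau(G)$, where $\tau_e(G)$ counts the spanning trees containing $e$. Contracting $e$ gives a bijection between spanning trees of $G$ that use $e$ and spanning trees of $G/e$, whence $\tau_e(G)=\tau(G/e)$. Next I would identify $\tau(G/e)$ with the number $F(x,y)$ of spanning $2$-forests of $G$ whose two trees separate $x$ and $y$: splitting the merged vertex of $G/e$ back into $x$ and $y$ turns a spanning tree of $G/e$ into a subgraph on $V$ with $|V|-2$ edges and no cycles, hence exactly two components, necessarily separating $x$ and $y$; the inverse map merges $x$ and $y$. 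This yields $\bP_{\mu_0}[e\in T]=F(x,y)/\tau(G)$.

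It then remains to show $\cReff(x,y)=F(x,y)/\tau(G)$, and this is the electrical heart of the argument. Let $L$ be the graph Laplacian and $L_y$ the reduced Laplacian obtained by deleting the row and column of $y$. Grounding at $y$ and injecting a unit current at $x$ produces a voltage vector $v$ with $v_{-y}=L_y^{-1}e_x$, so that $\cReff(x,y)=v(x)-v(y)=(L_y^{-1})_{xx}$. By Cramer's rule this equals $\det L_{\{x,y\}}/\det L_y$, where $L_{\{x,y\}}$ deletes both the rows and the columns indexed by $x$ and $y$. The matrix-tree theorem gives $\det L_y=\tau(G)$, while the all-minors matrix-tree theorem identifies the principal minor $\det L_{\{x,y\}}$ with exactly $F(x,y)$. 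Combining these displays proves $\bP_{\mu_0}[e\in T]=\cReff(e)=\Mod_2(x,y)^{-1}$.

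The main obstacle is this last step: the combinatorial reading of the doubly-deleted minor $\det L_{\{x,y\}}$ as counting $x$--$y$ separating $2$-forests (the Chaiken/all-minors form of the matrix-tree theorem) is the one nontrivial input, and it is precisely here that we lean on the classical theory, e.g. \cite[Section~4.2]{lyons-peres}. An alternative, arguably cleaner, route avoids minors entirely: interpret $\cReff(x,y)$ as the value of the unit current flow on the edge $e$ joining source $x$ and sink $y$ directly, the current being $\cReff(x,y)\,\si(e)=\cReff(x,y)$ since $\si(e)=1$, and then invoke Kirchhoff's current formula expressing this edge current as $\bP_{\mu_0}[e\in T]$. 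Either way the crux is the same identity between a spanning-tree ratio and an electrical quantity.
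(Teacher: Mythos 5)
Your proposal is correct, but there is nothing in the paper to compare it against: the paper states this result as a classical fact due to Kirchhoff and simply cites \cite[Section~4.2]{lyons-peres}, offering no proof of its own. Your argument supplies one, via the standard determinantal route: reduce $\bP_{\mu_0}[e\in T]$ to the ratio $\tau(G/e)/\tau(G)$ by the contraction bijection, identify $\tau(G/e)$ with the number $F(x,y)$ of spanning $2$-forests separating $x$ from $y$, and then compute $\cReff(x,y)=(L_y^{-1})_{xx}=\det L_{\{x,y\}}/\det L_y$ by Cramer's rule, the matrix-tree theorem, and its all-minors extension. Each step checks out: the contraction bijection is sound (an acyclic edge set in $G/e$ pulls back to an acyclic edge set in $G$ with $|V|-2$ edges whose two components necessarily separate $x$ and $y$), the reduced-Laplacian formula for effective resistance is standard, and the Cramer's-rule sign is $+$ because the minor is principal. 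The one nontrivial input you do not prove, the identification of $\det L_{\{x,y\}}$ with the $2$-forest count, you flag explicitly and source to the classical literature, which puts your write-up on the same footing as the paper's bare citation, only with much more of the skeleton filled in. It is worth noting that the reference the paper leans on develops Kirchhoff's theorem through the electrical/random-walk interpretation of the uniform spanning tree (the edge-inclusion probability as the unit current through $e$), which is closer to the alternative route you sketch in your final paragraph than to your determinantal argument; both are legitimate. Finally, your disposal of the second equality $\cReff(e)=\Mod_2(x,y)^{-1}$ via Theorem~\ref{thm:generalize} and the definition of effective resistance as the reciprocal of effective conductance is exactly right, and is the only portion of the statement that the paper's own machinery actually covers.
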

Moreover, we can use (\ref{eq:expected-usage}) to rewrite  (\ref{eq:prob-dual})  as follows
\begin{equation}\label{eq:modsumsq}
\Mod_2(\Ga_{\rm spt})^{-1}=\min_{\mu\in\cP(\cT)}\mu^T \cN\cN^T \mu=\min_{\mu\in\cP(\cT)}\|\cN^T \mu\|_2^2=
\min_{\mu\in\cP(\cT)}\sum_{e\in E}\bP_{\mu}\left[e\in T\right]^2
\end{equation}
Therefore, combining this with Corollary \ref{cor:prob-bounds} with obtain a special lower bound for spanning tree modulus.
\begin{corollary}
Let $\Ga_{\rm spt}=\cT$ the family of all spanning trees of a given graph $G$. Then,
\[
\Mod_2(\Ga_{\rm spt})^{-1}\leq \sum_{e\in E}\cReff(e)^2
\]
\end{corollary}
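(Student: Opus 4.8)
The plan is to evaluate the variational formula for the reciprocal modulus at a single convenient probability mass function, namely the uniform distribution on spanning trees, and then convert the resulting edge-marginals into effective resistances via Kirchhoff's theorem. All the necessary machinery is already in place, so the argument is essentially an assembly of previously established facts.

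First I would recall equation~\eqref{eq:modsumsq}, which expresses the reciprocal of the spanning tree modulus as a minimum over the whole probability simplex,
\[
\Mod_2(\Ga_{\rm spt})^{-1}=\min_{\mu\in\cP(\cT)}\sum_{e\in E}\bP_{\mu}\left[e\in T\right]^2.
\]
(Equivalently, one may invoke the lower bound in Corollary~\ref{cor:prob-bounds} directly with a fixed choice of $\mu$.) Because the right-hand side is a minimum taken over all $\mu\in\cP(\cT)$, any single admissible choice of $\mu$ furnishes an upper bound for $\Mod_2(\Ga_{\rm spt})^{-1}$; crucially, no optimality of the chosen measure needs to be checked.

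Next I would specialize to $\mu=\mu_0$, the uniform distribution on $\cT$, obtaining
\[
\Mod_2(\Ga_{\rm spt})^{-1}\le \sum_{e\in E}\bP_{\mu_0}\left[e\in T\right]^2.
\]
I would then apply Kirchhoff's theorem (Theorem~\ref{thm:usteffres}), which identifies the edge-inclusion probability under the uniform spanning tree measure with the effective resistance, $\bP_{\mu_0}[e\in T]=\cReff(e)$. Substituting this identity into the displayed inequality immediately yields the claim $\Mod_2(\Ga_{\rm spt})^{-1}\le \sum_{e\in E}\cReff(e)^2$.

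As for the main obstacle: at the level of this corollary there is essentially none, since the two genuinely nontrivial inputs are imported wholesale. The first is the probabilistic dual characterization of $\Mod_2$ culminating in~\eqref{eq:modsumsq} and Corollary~\ref{cor:prob-bounds}, and the second is Kirchhoff's classical resistance formula, both of which are cited rather than reproved here. The only conceptual point worth flagging is the recognition that the \emph{minimum} formula automatically produces an \emph{upper} bound on $\Mod_2(\Ga_{\rm spt})^{-1}$ when evaluated at any particular measure, so that the uniform measure, for which the marginals are known in closed form, gives the stated effective-resistance bound.
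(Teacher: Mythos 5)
Your proposal is correct and follows exactly the paper's own route: evaluate the minimum in~\eqref{eq:modsumsq} at the uniform distribution $\mu_0$ (equivalently, invoke Corollary~\ref{cor:prob-bounds}), then apply Kirchhoff's theorem (Theorem~\ref{thm:usteffres}) to identify $\bP_{\mu_0}[e\in T]$ with $\cReff(e)$. Nothing is missing, including the key observation that any particular choice of $\mu$ upper-bounds the minimum and hence $\Mod_2(\Ga_{\rm spt})^{-1}$.
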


\section{The basic algorithm}
\label{sec:alg-basics}
Our approach in designing algorithms for computing modulus, rather than
focusing on solving (\ref{eq:cvx-opt-orig}) for $\Ga$ or an essential
subfamily $\Ga^*$, will consist in trying to locate a minimal subfamily
$\tilde{\Ga}$, by building an approximating family one walk at the time. This approach allows the algorithm to deal with fairly small families of walks at each step.

The paper~\cite{adsgpc} presents a basic algorithm, suggested by the
monotonicity of $p$-modulus.  In simplest form, the algorithm is as
follows.
\begin{center}
\fbox{
\parbox{10cm}{
\begin{itemize}
\item {\bf Start:} Set $\Ga^\prime=\varnothing$ and $\rho\equiv 0$.

\item  {\bf Repeat:} 
\begin{itemize}
\item If $\ell_\rho(\Ga)\geq 1$, stop.

\item Else find $\ga\in \Ga\setminus\Ga^\prime$ such that $\ell_\rho(\ga)<1$.

\item Add $\ga$ to $\Ga^\prime$.

\item Optimize $\rho$ so that $\cE_p(\rho)=\Mod_p(\Ga^\prime)$.
\end{itemize}
\end{itemize}
}}
\end{center}
\vspace{0.5cm}

This is an example of an exterior point method that repeatedly solves the minimization with a subset of the constraints and adds a violated constraint at each step.  The algorithm will terminate when $\Ga'$ contains a minimal subfamily $\tilde{\Ga}$, since in this case all active constraints will have been added.  However, since $\tilde{\Ga}$ is generally not known a priori, $\Ga'$ may contain many more walks than $\tilde{\Ga}$ upon termination.

The basic algorithm can be improved in a number of ways.  For example, as described in~\cite{adsgpc}, if $\ell_\rho(\Gamma^*)>0$ in any iteration, then $\rho/\ell_\rho(\Gamma^*)\in A(\Gamma^*)$ and, thus, provides an upper bound on $\Mod_p(\Ga^*)$.  This idea allows us to add a stopping condition to the basic algorithm depending on a preset tolerance $\etol$: if $\ell_\rho(\Ga^*)\ge 1-\etol$, then the algorithm terminates.

\begin{theorem}[\cite{adsgpc}] Let $\Gamma$ be a family of walks on a finite graph and suppose that $\rho^*$ is the extremal density for $\Mod_p(\Gamma)$ with $1<p<\infty$. Fix an error tolerance $0<\etol<1$.  Then,  the algorithm with stopping condition $\ell_\rho(\Ga^*)\ge 1-\etol$ will terminate in finite time, and will output a subfamily $\Gamma'\subset\Gamma$ and a density $\rho$ such that
  \begin{equation*}
   \frac{\Mod_p(\Gamma)- \Mod_p(\Gamma')}{\Mod_p(\Ga)} \le \etol,\qquad
\frac{\|\rho^*-\rho\|_p}{\|\rho^*\|_p}\leq \begin{cases}2^{1-1/p} \etol^{1/p} &\qquad \mbox{if $p\geq 2$}\\
\left(\frac{2}{p-1}\etol\right)^{1-1/p}&\qquad\mbox{if $1<p<2$}
\end{cases}
\end{equation*}
\end{theorem}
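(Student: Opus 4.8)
The plan is to prove the three assertions—finite termination, the relative modulus error, and the relative density error—separately, drawing on three ingredients that are already in place: monotonicity of modulus (Remark~\ref{rem:remarks}(b)), a rescaling trick that turns an almost-admissible density into a genuinely admissible one, and the uniform convexity of the $p$-energy in the form of Clarkson's inequalities. Throughout, I write $\rho_n$ for the (unique, by Remark~\ref{rem:remarks}(c)) extremal density of the current subfamily $\Ga'_n$ at the $n$-th iteration, so that $\cE_p(\rho_n)=\Mod_p(\Ga'_n)$, and I set $\lambda_n:=\ell_{\rho_n}(\Ga)$, so the stopping rule reads $\lambda_n\ge 1-\etol$. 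Since $\Ga'_n\subseteq\Ga$, monotonicity gives $\Mod_p(\Ga'_n)\le\Mod_p(\Ga)$ at every step.

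For finite termination I would argue that the energies $a_n:=\cE_p(\rho_n)$ are nondecreasing (each iteration adds a violated constraint) and bounded above by $\Mod_p(\Ga)$, hence Cauchy. Since both $\rho_n$ and $\rho_{n+1}$ are admissible for $\Ga'_n$, so is their midpoint; Clarkson's inequality applied to $\rho_n,\rho_{n+1}$ then yields $\|\rho_{n+1}-\rho_n\|_p^p\le C(a_{n+1}-a_n)\to 0$, so $(\rho_n)$ is Cauchy. The decisive point is that the violated walk $\gamma_{n+1}$ returned at each step may be taken to be a shortest $\rho_n$-walk, which for a connecting family is a simple path and therefore has usage vector $\cN(\gamma_{n+1})$ of uniformly bounded $\ell^1$-norm. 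From $\ell_{\rho_n}(\gamma_{n+1})<1-\etol$ and $\ell_{\rho_{n+1}}(\gamma_{n+1})\ge 1$ we get $\etol<\ell_{\rho_{n+1}-\rho_n}(\gamma_{n+1})\le\|\cN(\gamma_{n+1})\|_\infty\,\|\rho_{n+1}-\rho_n\|_1\to 0$, a contradiction. Equivalently, and more cleanly, the shortest-walk oracle only ever returns simple paths, of which there are finitely many, and once $\gamma\in\Ga'$ it satisfies $\ell_\rho(\gamma)\ge 1$ forever, so each is added at most once; hence the loop runs a bounded number of times.

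For the error bounds, let $\hat\rho:=\rho_n/\lambda_n$ at termination, which lies in $\Adm(\Ga)$. This gives the sandwich $\Mod_p(\Ga')\le\Mod_p(\Ga)\le\cE_p(\hat\rho)=\lambda_n^{-p}\Mod_p(\Ga')$, from which the relative modulus error is at most $1-\lambda_n^{p}$, the stopping threshold being calibrated so that $1-\lambda_n^{p}\le\etol$. For the density bound I take $\rho=\hat\rho$ as the returned density; both $\rho^*$ and $\hat\rho$ lie in the convex set $\Adm(\Ga)$, so their midpoint is admissible and $\cE_p\big(\tfrac{\rho^*+\hat\rho}{2}\big)\ge\Mod_p(\Ga)=\cE_p(\rho^*)$. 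Feeding this into Clarkson's inequality—the form $\|\tfrac{f+g}{2}\|_p^p+\|\tfrac{f-g}{2}\|_p^p\le\tfrac12(\|f\|_p^p+\|g\|_p^p)$ when $p\ge 2$, and the conjugate form with exponent $q=p/(p-1)$ when $1<p<2$—makes the midpoint term cancel against $\cE_p(\rho^*)$ and bounds $\|\rho^*-\hat\rho\|_p$ purely in terms of the energy gap $\cE_p(\hat\rho)-\Mod_p(\Ga)$, which the stopping rule controls by $\etol\,\Mod_p(\Ga)$. Taking $p$-th (resp.\ $q$-th) roots and dividing by $\|\rho^*\|_p=\Mod_p(\Ga)^{1/p}$ produces the stated $2^{1-1/p}\etol^{1/p}$ and $\big(\tfrac{2}{p-1}\etol\big)^{1-1/p}$ bounds.

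The main obstacle is the density estimate for $1<p<2$: the energy is uniformly convex only in the weaker sense captured by the second Clarkson inequality, so one must track the conjugate exponent $q$ and linearize $(1+\etol)^{q-1}-1$ to extract the precise constant $\tfrac{2}{p-1}$; a sharper inequality such as Hanner's may be required to reach exactly the stated constant rather than a slightly larger one. A secondary subtlety is finite termination beyond connecting families: for a general family of walks one must still guarantee that the oracle returns objects of uniformly bounded usage, so that the Cauchy contradiction closes, and this is precisely where the existence of a finite essential subfamily (Theorem~\ref{thm:essential}) is used.
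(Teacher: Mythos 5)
This survey does not prove the statement at all: the theorem is imported verbatim from the preprint \cite{adsgpc}, so there is no internal proof to compare you against, and I can only judge your argument on its own merits. Your skeleton --- monotonicity plus the rescaling $\hat\rho=\rho/\ell_\rho(\Ga^*)\in\Adm(\Ga)$ for the modulus gap, Clarkson-type uniform convexity for the density gap, and a finiteness argument for termination --- is the natural one and almost certainly the route of the cited paper. But as written your proof has genuine gaps.

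The serious one is the modulus bound. Your sandwich gives
\[
\frac{\Mod_p(\Ga)-\Mod_p(\Ga')}{\Mod_p(\Ga)}\;\le\; 1-\lambda^p,
\qquad \lambda:=\ell_\rho(\Ga^*)\ge 1-\etol,
\]
i.e.\ a bound of $1-(1-\etol)^p$, which exceeds $\etol$ for every $p>1$. Your phrase ``the stopping threshold being calibrated so that $1-\lambda_n^p\le\etol$'' contradicts the stopping rule in the statement, which is $\lambda\ge 1-\etol$, not $\lambda\ge(1-\etol)^{1/p}$. This is not a removable technicality. Take the two-edge path graph and $\Ga=\{\ga_1,\ga_2\}$ with usage vectors $\cN(\ga_1)=(3,1)$, $\cN(\ga_2)=(1,3)$, and let $p$ be large. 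If the oracle first returns $\ga_1$, the optimizer for $\{\ga_1\}$ leaves $1-\ell_\rho(\ga_2)\approx\tfrac{\ln 3}{2(p-1)}$, so for $\etol$ just above $\tfrac{\ln 3}{2(p-1)}$ the algorithm stops after one step, while the relative modulus error tends to $1-2\cdot 3^{-3/4}\approx 0.12\gg\etol$ (already at $p=10$, $\etol=0.06$ the error is about $0.13$). So with the literal stopping rule the first displayed inequality of the theorem is false, and no argument can close the gap you left. What your own Clarkson computation in fact shows is that the clean constants in the statement are exactly what falls out of the calibrated rule: if the stopping condition guarantees $\cE_{p,\si}(\hat\rho)\le(1+\etol)\Mod_p(\Ga')$, i.e.\ $\ell_\rho(\Ga^*)\ge(1+\etol)^{-1/p}$, then the relative modulus error is $\le\etol/(1+\etol)\le\etol$ and Clarkson's first inequality gives precisely $2^{1-1/p}\etol^{1/p}$ for $p\ge 2$. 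Either the stopping rule or your proof must be recalibrated accordingly; you cannot have both as written. Relatedly, for $1<p<2$ you concede that Clarkson's second inequality yields a constant strictly worse than $\tfrac{2}{p-1}$ and defer to Hanner's inequality; as it stands that part is a conjecture, not a proof.

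On termination, your ``cleaner'' argument (finitely many simple paths, each added at most once) silently assumes both that $\Ga$ is a connecting family and that the oracle returns shortest, hence simple, violated walks; the theorem allows arbitrary families of walks and the algorithm allows an arbitrary violated walk to be returned. The correct fix is not ``uniformly bounded usage'' but the order structure behind Theorem~\ref{thm:essential} (Dickson's lemma): if a run were infinite, the integer usage vectors $\cN(\ga_1),\cN(\ga_2),\dots$ of the added walks would contain indices $i<j$ with $\cN(\ga_i)\preceq\cN(\ga_j)$; but $\rho_{j-1}$ is admissible for $\Ga'_{j-1}\ni\ga_i$, hence $\ell_{\rho_{j-1}}(\ga_j)\ge\ell_{\rho_{j-1}}(\ga_i)\ge 1$, contradicting that $\ga_j$ was violated when it was added. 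This closes termination for any oracle and any family of walks, with no bounded-usage hypothesis and without your Cauchy-sequence detour.
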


For the case $p=2$, an efficient primal-dual active set method, based on the algorithm of Goldfarb and Idnani \cite{goldfarb-idnani:mp1983}, can compute the spanning tree modulus (with tolerance $\etol=10^{-3}$) on a graph with approximately 2 million nodes and 2.8 million edges in under 15 minutes.  Details of this algorithm will be presented in a forthcoming paper.

\section{Examples of minimal subfamilies}\label{sec:most-important-walks}

In \cite{abppcw:ecgd2015} we showed that the extremal density $\rho^*$
can be interpreted as sensitivity of modulus to changes in
edge-weights.
\begin{theorem}[\cite{abppcw:ecgd2015}]
 Fix a family $\Ga$ and  let $1<p<\infty$. Define the function
  $F(\sigma):=\Mod_p(\Gamma;\sigma)$ on weights
  $\sigma:E\to(0,\infty)$.  
The function $F$ is Lipschitz continuous and concave.
Moreover, if we let
  $\rho_\sigma^*$ denote the unique extremal density for
  $\Mod_p(\Gamma;\sigma)$,  then 
  \begin{equation}\label{eq:derivative}
    \frac{\partial F(\sigma)}{\partial\sigma(e)} = \rho_\sigma^*(e)^p, \qquad\forall e\in E.
  \end{equation}
\end{theorem}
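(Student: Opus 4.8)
The plan is to read $F(\sigma)=\Mod_p(\Gamma;\sigma)=\inf_{\rho\in\Adm(\Gamma)}\sum_{e\in E}\sigma(e)|\rho(e)|^p$ as a pointwise infimum over the \emph{fixed} convex set $\Adm(\Gamma)$, which is the crucial point: by \eqref{eq:Adm} the admissible set depends only on $\cN$ and not on $\sigma$. For each fixed $\rho$ the map $\sigma\mapsto\cE_{p,\sigma}(\rho)$ is affine, so $F$ is an infimum of affine functions of $\sigma$ and is therefore concave; this settles concavity in one line. The single inequality driving everything else comes from using the extremal density $\rho_\sigma^*$ for weight $\sigma$ as a competitor in the problem for weight $\sigma'$: since $\rho_\sigma^*\in\Adm(\Gamma)$,
\[
F(\sigma')\le \cE_{p,\sigma'}(\rho_\sigma^*)=F(\sigma)+\sum_{e\in E}\bigl(\sigma'(e)-\sigma(e)\bigr)\rho_\sigma^*(e)^p .
\]

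I read this master inequality in two ways. First, it says precisely that the vector $\rho_\sigma^*(\cdot)^p$ is a supergradient of the concave function $F$ at $\sigma$. Second, invoking the uniform bound $0\le\rho_\sigma^*\le \cN_{\text{min}}^{-1}$ from Remark~\ref{rem:remarks}~(c) (which holds with a constant independent of $\sigma$), the same inequality and its symmetric counterpart give
\[
|F(\sigma')-F(\sigma)|\le |E|\,\cN_{\text{min}}^{-p}\,\|\sigma'-\sigma\|_\infty,
\]
so $F$ is Lipschitz with an explicit constant.

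For the derivative formula I would first establish that $\sigma\mapsto\rho_\sigma^*$ is continuous, by a compactness-plus-uniqueness argument. Given $\sigma_n\to\sigma_0$, all the densities $\rho_{\sigma_n}^*$ lie in the compact box $[0,\cN_{\text{min}}^{-1}]^E$; any subsequential limit $\bar\rho$ is admissible because the constraint $\cN\rho\ge\one$ is closed, and by joint continuity of $(\sigma,\rho)\mapsto\cE_{p,\sigma}(\rho)$ together with the continuity of $F$ just proved, $\cE_{p,\sigma_0}(\bar\rho)=\lim F(\sigma_{n_k})=F(\sigma_0)$, so $\bar\rho$ is extremal for $\sigma_0$. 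Uniqueness of the extremal density for $1<p<\infty$ forces $\bar\rho=\rho_{\sigma_0}^*$, and since every subsequence has a further subsequence with this same limit, the whole sequence converges. With continuity in hand, I fix a direction $h$, set $\sigma_t=\sigma_0+th$, and apply the supergradient inequality at both $\sigma_0$ and $\sigma_t$ to sandwich the difference quotient. For $t>0$ this reads
\[
\sum_{e\in E}\rho_{\sigma_t}^*(e)^p h(e)\le \frac{F(\sigma_t)-F(\sigma_0)}{t}\le \sum_{e\in E}\rho_{\sigma_0}^*(e)^p h(e),
\]
with the two outer inequalities reversed when $t<0$. Letting $t\to 0$ and using continuity of $\sigma\mapsto\rho_\sigma^*$ squeezes the quotient to $\sum_{e}\rho_{\sigma_0}^*(e)^p h(e)$ from both sides, so the two-sided directional derivative exists; taking $h$ equal to the $e$-th coordinate vector yields \eqref{eq:derivative}.

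The main obstacle is the continuity of the minimizer $\sigma\mapsto\rho_\sigma^*$: every other ingredient is either a one-line convexity observation or an immediate consequence of the master supergradient inequality. The one place demanding care is the directional-derivative squeeze, where I must track that dividing by $t<0$ flips the inequalities, so that both the left and right limits coincide and a genuine (two-sided) partial derivative, rather than merely one-sided derivatives, is obtained.
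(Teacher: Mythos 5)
This theorem is imported from \cite{abppcw:ecgd2015} and the present paper contains no proof of it, so there is no in-paper argument to compare against; your proposal, however, is correct and complete as written. It is essentially the standard envelope (Danskin-type) argument --- concavity of $F$ as an infimum of functions linear in $\sigma$ over the $\sigma$-independent set $\Adm(\Ga)$, the supergradient inequality from using $\rho_\sigma^*$ as a competitor at $\sigma'$, the $\sigma$-uniform bound $0\le\rho_\sigma^*\le\cN_{\text{min}}^{-1}$ giving the Lipschitz constant, and the compactness-plus-uniqueness continuity of $\sigma\mapsto\rho_\sigma^*$ feeding the two-sided squeeze of the difference quotient --- which is the same route taken in the cited reference, so nothing further is needed.
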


Equation (\ref{eq:derivative}) shows that the values of the
extremal density, $\rho^*$, can be interpreted as a measure of each
edge's importance in the $p$-modulus problem.  If $\rho^*(e)$ is very
small, then the modulus will not change much if $\sigma(e)$ is
altered.  On the other hand, if $\rho^*(e)$ is very large, then the
modulus will be quite sensitive to changes in $\sigma(e)$.  The dual
formulation of the $p$-modulus given in~\eqref{eq:optimal-rho-lambda} provides a
similarly interesting interpretation.  Each walk $\gamma\in\Gamma$ has
an associated dual variable $\lambda(\gamma)$, which provides a
measure of importance of that walk in the computation of $p$-modulus.
The important walks depend  on $p$, $\si$, and  $\Gamma$. However,
Theorem~\ref{thm:minimal} establishes a uniform bound on their
cardinality.

\begin{example}[A routing example]\label{ex:routing}

Figure~\ref{fig:routing-overlap} shows an example of connecting
modulus with the family $\Gamma$ as the set of paths connecting the
left-most to the right-most node.  The modulus approximation, with
tolerance $\etol=10^{-15}$, is $\Mod(\Gamma)\approx 0.741024$.  The
values $$\rho^*(e)/\Mod(\Gamma) =
\bE_{\mu^*}\left[\cN(\underline{\gamma},e)\right]$$ are shown on each
edge.  The expected overlap,
$\bE_{\mu}\left[C(\underline{\gamma},\underline{\gamma'})\right]$, for
this example is approximately $1/0.741024=1.34948395733$.  The dual
problem~\eqref{eq:prob-dual} yields an optimal pmf $\mu^*$ supported
on 10 paths, shown in Figure~\ref{fig:routing-mu} by thick black
lines.  The values of $\mu^*$ on these paths are shown above each
picture.

\begin{figure}[h!]
  \centering
  \includegraphics[width=0.9\textwidth]{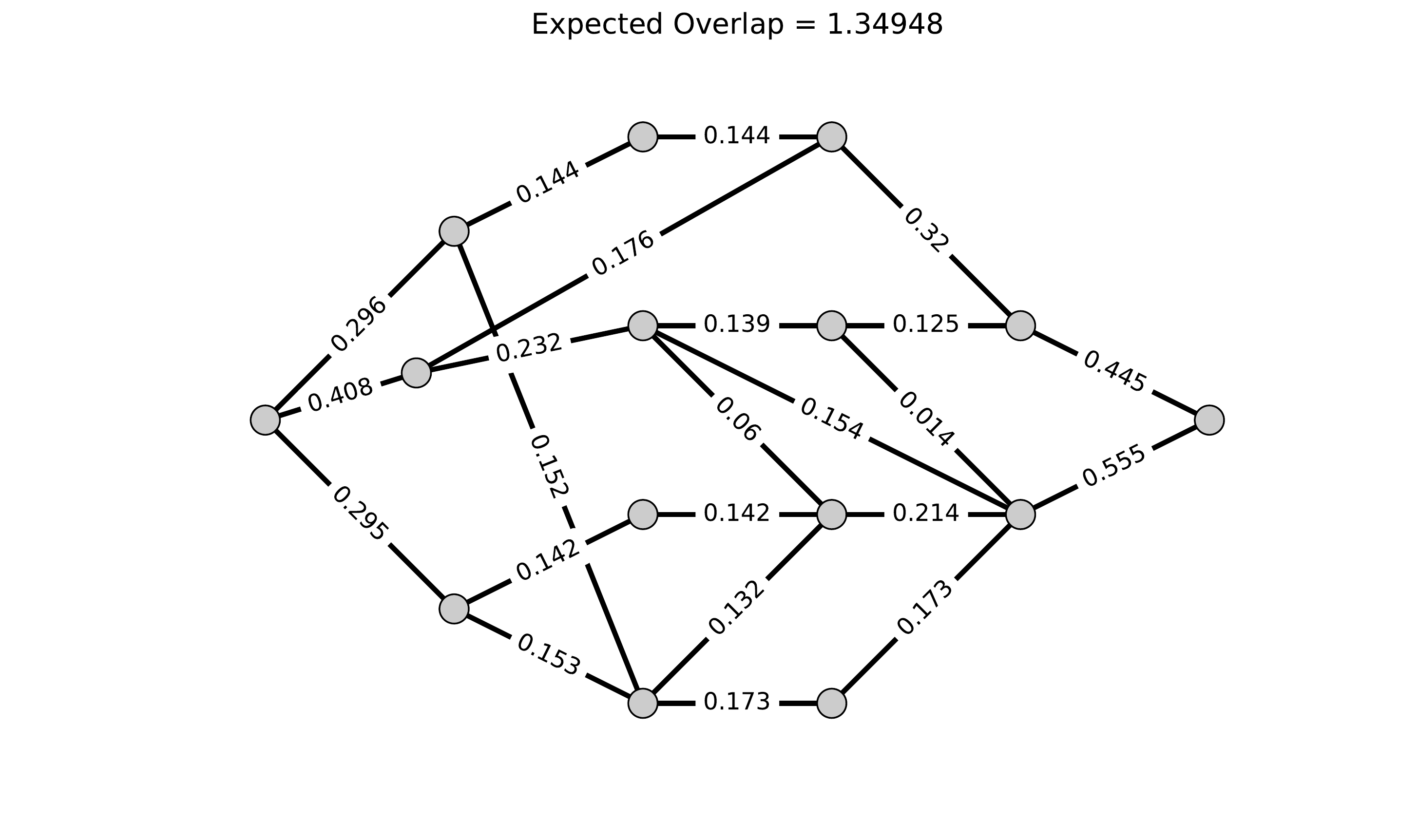}
  \caption{Expected edge usage,
    $\bE_{\mu^*}\left[\cN(\underline{\gamma},e)\right]$, with respect
    to the optimal pmf $\mu^*$ for Example~\ref{ex:routing}.}
  \label{fig:routing-overlap}
\end{figure}

\begin{figure}[h!]
 \includegraphics[width=0.8\textwidth]{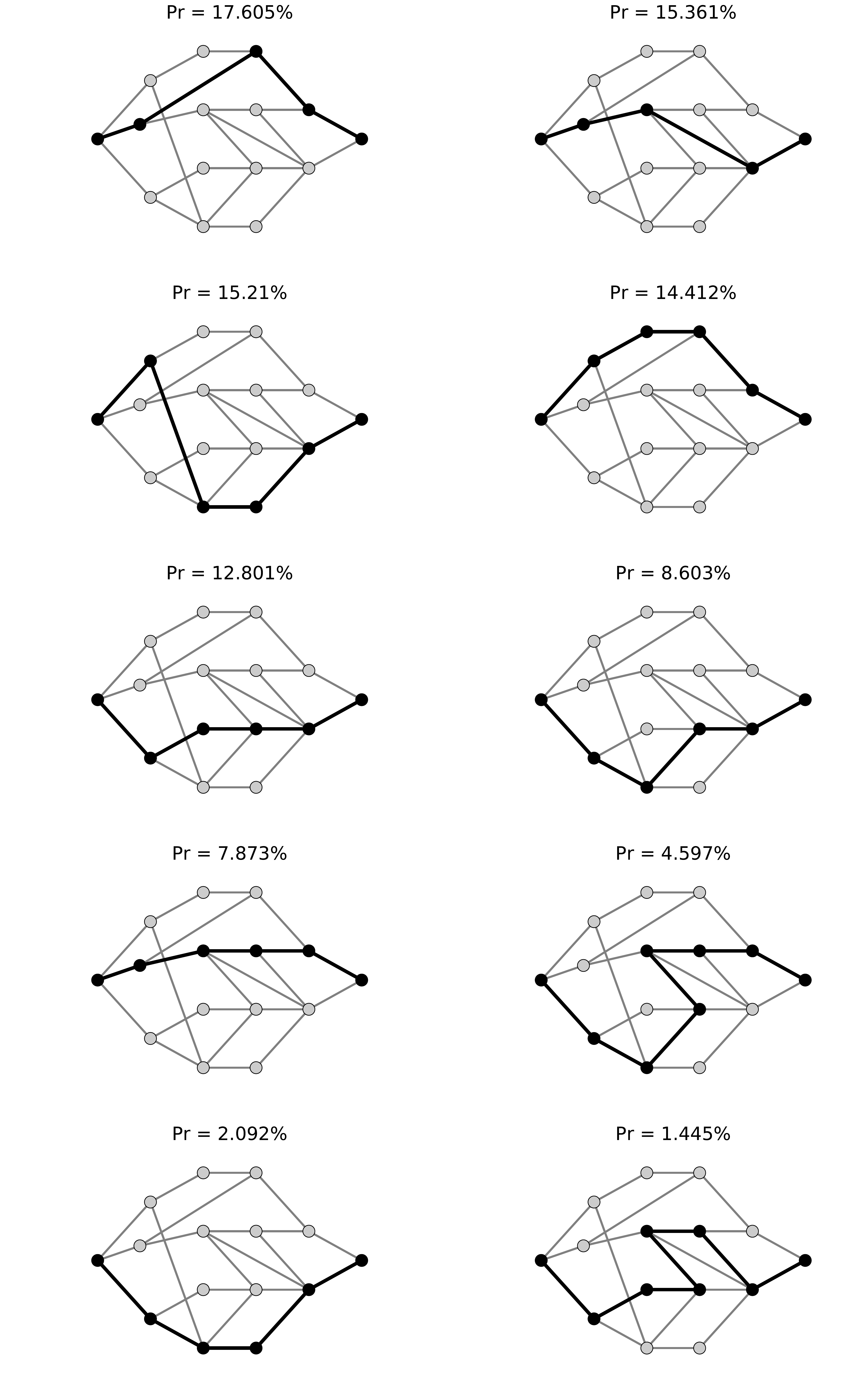}
 \caption{Optimal pmf $\mu^*$ for Example~\ref{ex:routing}.}
 \label{fig:routing-mu}
\end{figure}

\end{example}

\begin{example}[A spanning tree example]\label{ex:spanning}

\begin{figure}
  \centering
  \includegraphics[width=0.9\textwidth]{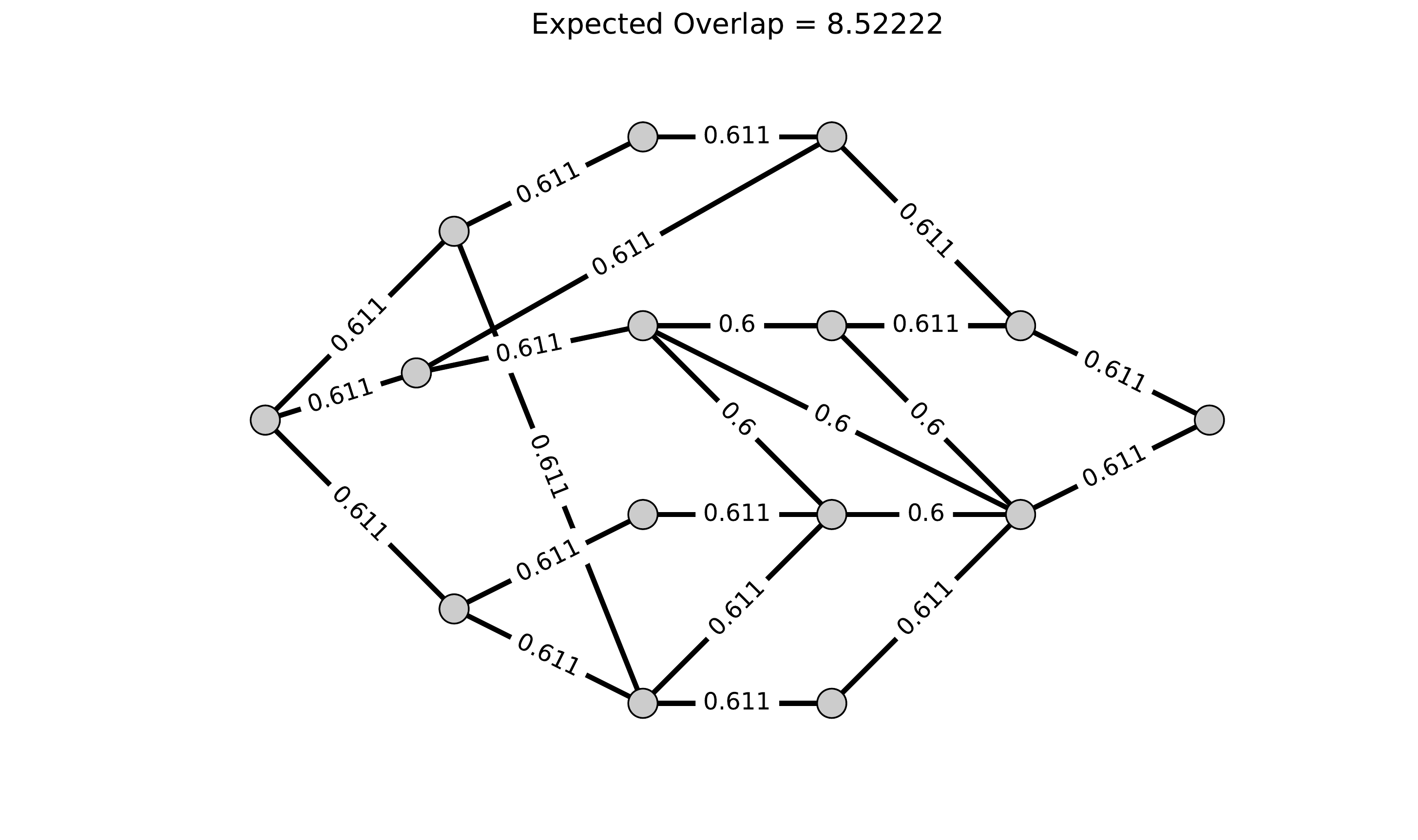}
  \caption{Expected edge usage,
    $\bE_{\mu^*}\left[\cN(\underline{\gamma},e)\right]$, with respect
    to the optimal pmf $\mu^*$ for Example~\ref{ex:spanning}.}
  \label{fig:st-overlap}
\end{figure}

\begin{figure}[h!]
  \centering
  \includegraphics[width=0.9\textwidth]{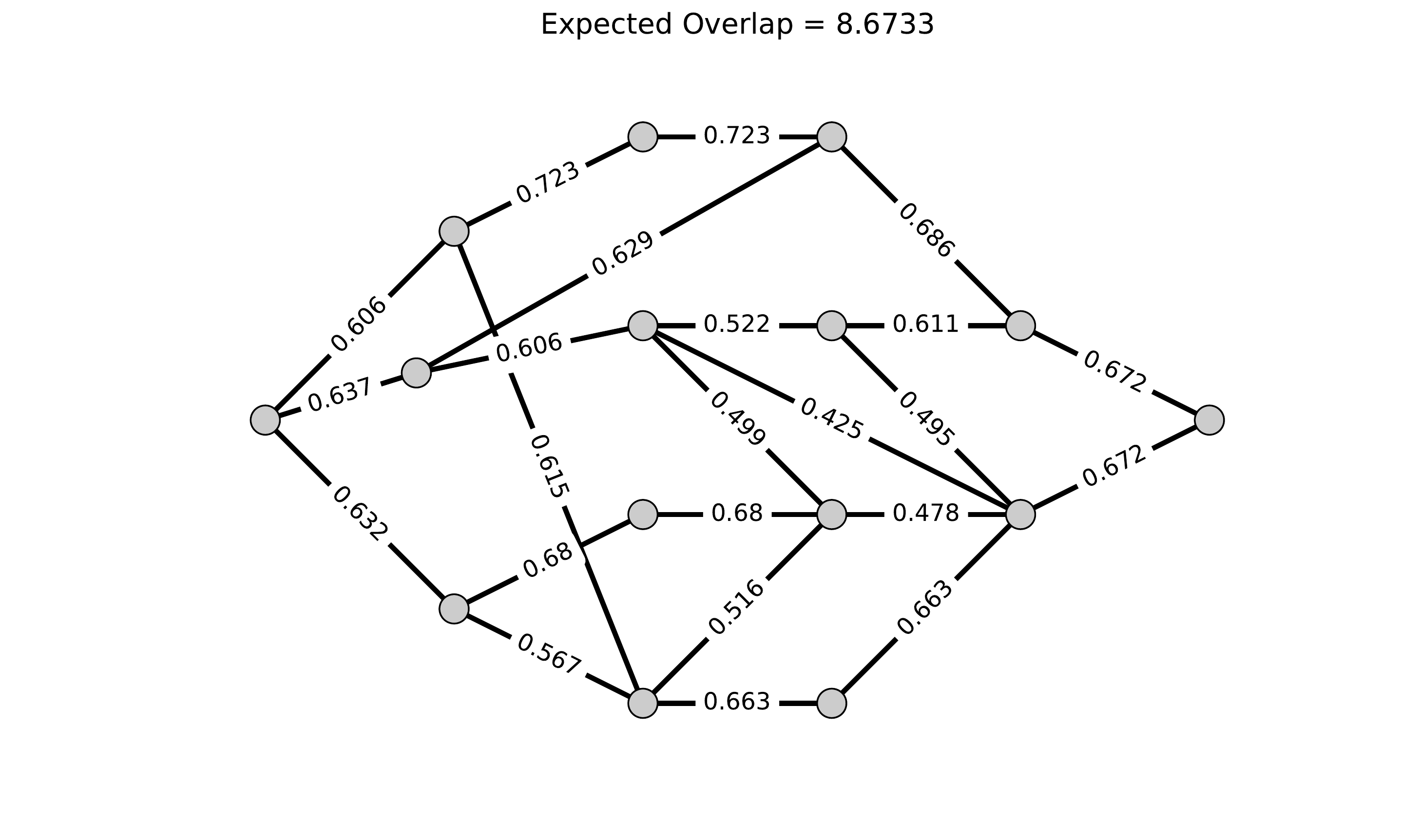}
  \caption{Expected edge usage,
    $\bE_{\mu_0}\left[\cN(\underline{\gamma},e)\right]$, with respect
    to the uniform pmf $\mu_0$ for Example~\ref{ex:spanning}.}
  \label{fig:st-overlap-unif}
\end{figure}

\begin{figure}
 \includegraphics[width=0.8\textwidth]{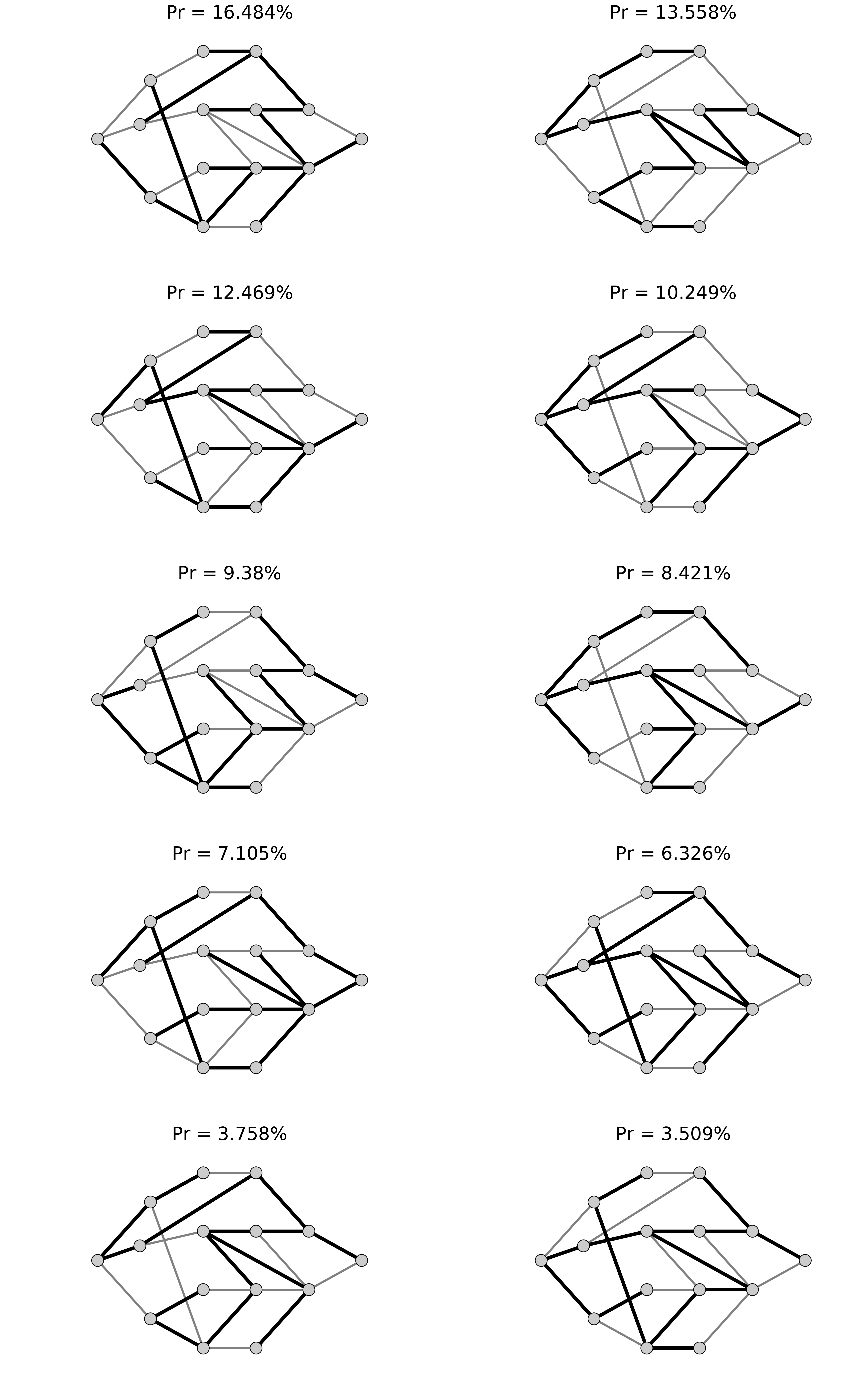}
 \caption{Ten spanning trees with largest value of optimal pmf $\mu^*$
   for Example~\ref{ex:spanning}.}
 \label{fig:st-mu}
\end{figure}

Figure~\ref{fig:st-overlap} shows an example of spanning tree modulus
on the same graph as in Example~\ref{ex:routing}.  The modulus
approximation, with tolerance $\etol=10^{-15}$, is
$\Mod(\Gamma)\approx 0.11734$.  The values $$\rho^*(e)/\Mod(\Gamma) =
\bE_{\mu^*}\left[\cN(\underline{\gamma},e)\right]$$ are shown on each
edge.  Notice that the expected usage is nearly identical on all
edges, taking only two distinct values, $0.6$ and $0.611$.  

The expected overlap,
$\bE_{\mu}\left[C(\underline{\gamma},\underline{\gamma'})\right]$, for
this example is approximately $1/0.11734=8.5222$.  The dual
problem~\eqref{eq:prob-dual} yields an optimal pmf $\mu^*$ supported
on 22 trees.  In Figure~\ref{fig:st-mu}, the thick black lines
indicate the 10 most likely spanning trees, sampled according to
$\mu^*$.  The values of $\mu^*$ on these trees are shown above each
picture.

For comparison, Figure~\ref{fig:st-overlap-unif} shows the expected
usage and expected overlap when the uniform pmf, $\mu_0$, is used.
Since there are 72,650 spanning trees for this graph (as computed by
Kirchhoff's Matrix Tree Theorem, see \cite[Theorem 1.19]{harris-hirst-mossinghoff2008}), it is not easy to compute the usage
matrix $\cN$.  However, by Theorem \ref{thm:usteffres}, the expected usage
$\bE_{\mu_0}\left[\cN(\underline{\gamma},e)\right]$ with respect to
the uniform pmf is equal to the effective resistance of the edge $e$.
Thus, the expected usages can be computed from the pseudoinverse of
the graph Laplacian, and the expected overlap is then found by summing
the squares of the effective resistances, see (\ref{eq:modsumsq}).
\end{example}

\clearpage

%

\end{document}